\documentclass[12pt]{amsart}
\usepackage[]{fontenc}
\usepackage[latin9]{inputenc}
\usepackage[letterpaper]{geometry}
\geometry{verbose,tmargin=3cm,bmargin=3cm,lmargin=3.2cm,rmargin=3.2cm}
\usepackage{amsthm}
\usepackage{amstext}
\usepackage{setspace}
\usepackage{amssymb}
\onehalfspacing

\makeatletter
\numberwithin{equation}{section} 
\numberwithin{figure}{section} 
\theoremstyle{plain}
\newtheorem{thm}{Theorem}[section]
  \theoremstyle{plain}
  \newtheorem{cor}[thm]{Corollary}
  \theoremstyle{definition}
  \newtheorem{problem}[thm]{Problem}
 \theoremstyle{definition}
  \newtheorem{example}[thm]{Example}
  \theoremstyle{remark}
  \newtheorem*{acknowledgement*}{Acknowledgement}
  \theoremstyle{plain}
  \newtheorem{lem}[thm]{Lemma}
  \theoremstyle{plain}
  \newtheorem{prop}[thm]{Proposition}
  \theoremstyle{remark}
  \newtheorem{claim}[thm]{Claim}


\DeclareMathOperator{\supp}{supp}

\makeatother

\begin{document}

\title{On notions of determinism in topological dynamics}

\author{Michael Hochman}
\begin{abstract}
We examine the relation between topological entropy, invertability,
and prediction in topological dynamics. We show that topological determinism
in the sense of Kami\'{n}sky Siemaszko and Szyma\'{n}ski imposes
no restriction on invariant measures except zero entropy. Also, we
develop a new method for relating topological determinism and zero
entropy, and apply it to obtain a multidimensional analog of this
theory. We examine prediction in symbolic dynamics and show that while
the condition that each past admit a unique future only occurs in
finite systems, the condition that each past have a bounded number
of future imposes no restriction on invariant measures except zero
entropy. Finally, we give a negative answer to a question of Eli Glasner
by constructing a zero-entropy system with a globally supported ergodic
measure in which every point has multiple preimages.
\end{abstract}
\maketitle

\section{Introduction}

There are several ways to define {}``determinism'' of a dynamical
system, all of which express the idea that the past determines the
future (and visa versa). In ergodic theory, a measure-preserving map
$T$ of a probability space $(X,\mathcal{B},\mu)$ is deterministic
if, for every measurable $f:X\rightarrow\mathbb{R}$ (or equivalently
every finite-valued $f$), the sequence $f(Tx),f(T^{2}x),\ldots$
determines $f(x)$ with probability one, that is, $f\in\sigma(Tf,T^{2}f,\ldots)$,
where $\sigma(\mathcal{F})$ is the $\sigma$-algebra generated by
$\mathcal{F}$. Another equivalent condition is that every factor
$(Y,\mathcal{C},\nu,S)$ of $(X,\mathcal{B},\mu,T)$ is essentially
invertible, i.e., there is an invariant set $Y_{0}\subseteq Y$ of
full measure such that $S|_{Y_{0}}$ is invertible. Yet another equivalent
condition which is widely used is that entropy vanish: $h(T,\mu)=0$.

In this work we examine the relations between prediction, invertability
and entropy in the category of topological dynamics, where by a topological
dynamical system $(X,T)$ we mean a continuous onto map $T:X\rightarrow X$
of compact metric space. One can find analogs of these three conditions,
but the relations between them are more complex. We present here several
results that underscore the independence of these notions, complementing
some of the recent works on the subject, e.g. \cite{NiteckiPrzytycki99,FiebigFiebigNitecki03,ChengNewhouse05}.

\subsection{\label{sub:Topological-predictability}Topological predictability}

Kami\'{n}ski, Siemaszko and Szyma\'{n}ski introduced in \cite{KaminskiSiemaszkoSzymanski03}
an interesting and natural notion of predictability and for topological
systems. A system $(X,T)$ is \emph{topologically predictable}%
\footnote{\emph{Kami}\'{n}\emph{ski et. al. use the term topological determinism,
but this seems to us confusing in the present context.}%
}, or TP,\emph{ }if for every continuous function $f\in C(X)$ we have
$f\in\left\langle 1,Tf,T^{2}f,\ldots\right\rangle $, where $\left\langle \mathcal{F}\right\rangle \subseteq C(X)$
denotes the closed algebra generated by a family $\mathcal{F}\subseteq C(X)$.
Kami\'{n}ski et. al. showed that $(X,T)$ is topologically predictable
if and only if every factor of $(X,T)$ is invertible, where a factor
is a system $(Y,S)$ and a continuous onto map $\pi:X\rightarrow Y$
such that $\pi T=S\pi$. 

One would like to understand what other dynamical implications topological
predictability has. In \cite{KaminskiSiemaszkoSzymanski2005} it was
shown that a TP systems have zero topological entropy (correcting
a gap, as the authors note, in their earlier proof from \cite{KaminskiSiemaszkoSzymanski03}),
but the converse to this is false. Indeed, every TP system on a totally
disconnected space is equicontinuous, whereas every zero entropy measure
can be realized as an invariant measure on a totally disconnected
space (and hence, for measures with irrational or continuous spectrum,
not TP).

Nonetheless, although {}``not TP'' seems to say little about the
invariant measures, TP is a rather strong condition, and one might
suppose it to impose restrictions on the measurable dynamics. In previous
work on the subject, the main tool used to establish that a system
is TP was the fact that, if every point in the product $(X\times X,T\times T)$
is forward recurrent, then $(X,T)$ is TP. Consequently, distal systems
and the pointwise rigid systems are TP; but no others were known. 

Our first result, which may be of independent interest, is that TP
imposes no restrictions on invariant measures except zero entropy.
\begin{thm}
\label{thm:TP-realization}For every zero-entropy, ergodic measure-preserving
system $(X,\mathcal{B},\mu,T)$ there is a topological system $(Y,S)$
and an invariant measure $\nu$ on $Y$ such that $(Y,\nu,S)\cong(X,\mathcal{B},\mu,T)$,
and, for every $y',y''$ in $Y$, the point $(y',y'')$ is forward
recurrent for $S\times S$. In particular, $(Y,S)$ is TP.
\end{thm}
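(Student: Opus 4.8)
The plan is to use the quoted criterion: a system in which every point of the product is forward recurrent for $S\times S$ is automatically TP, so it suffices to construct a topological model $(Y,S)$ with $(Y,\nu,S)\cong(X,\mathcal{B},\mu,T)$ in which, for every pair $y',y''\in Y$ and every $\varepsilon>0$, there are infinitely many $n$ with $d(S^ny',y')<\varepsilon$ and $d(S^ny'',y'')<\varepsilon$ at once. Any such sequence $n_k\to\infty$ makes $(S\times S)^{n_k}(y',y'')\to(y',y'')$, giving forward recurrence; the diagonal case $y'=y''$ already forces every point of $Y$ to be recurrent, and the off-diagonal case carries the real content.

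The first thing to settle is the ambient space. By the remark quoted in the introduction, a TP system on a totally disconnected space is equicontinuous, and hence can only carry Kronecker measures; since zero-entropy mixing systems exist, the model $Y$ must \emph{not} be totally disconnected. I would therefore avoid a symbolic or Cantor model and instead realize a Rokhlin-tower (cutting-and-stacking) presentation of $(X,\mathcal{B},\mu,T)$ geometrically, on a space with nontrivial connected topology --- for instance as a tower built over intervals, or an inverse limit of graphs --- engineered so that the induced map $S$ supports an invariant measure $\nu\cong\mu$. That such a presentation reproduces the given ergodic system up to measure isomorphism is the standard content of cutting-and-stacking, so the measurable half of the statement should come for free; the work is entirely in arranging the topology and the recurrence.

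The heart of the argument, and the step I expect to be the main obstacle, is to coordinate the return times of all points. In a tower model a point's $\varepsilon$-return times are dictated by its itinerary through the columns, so I would choose the successive cuts, the heights $h_k\to\infty$, and the spacer patterns so that, however two itineraries diverge, they are forced to realign infinitely often --- say by periodically returning every column to a common base configuration at controlled, increasingly sparse times --- yielding a structured family of common near-return times for any pair that refines as $\varepsilon_k\to 0$. The subtlety is that this coordination cannot be uniform: if $S^{n_k}$ were uniformly close to the identity along one fixed sequence $n_k$, then $U_S^{n_k}\to I$ in $L^2(\nu)$ and $\nu$ would be rigid, which is impossible for the mixing systems one must also realize. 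Thus the near-return times must depend genuinely on the pair $(y',y'')$, and the delicate point is to retain enough freedom in the spacers to reproduce an arbitrary zero-entropy measure while still forcing resynchronization for every pair, all on a space that is not totally disconnected.

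Granting the coordination, the theorem follows at once: for given $y',y''$ take the common near-return times $n_k\to\infty$ with $\varepsilon_k\to 0$, so that $(S\times S)^{n_k}(y',y'')\to(y',y'')$ and $(y',y'')$ is forward recurrent; as this holds for every pair, the quoted criterion shows $(Y,S)$ is TP, while $(Y,\nu,S)\cong(X,\mathcal{B},\mu,T)$ by construction.
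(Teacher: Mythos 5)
Your frame matches the paper's: the same forward-recurrence criterion for TP, the same observation that the model cannot be totally disconnected (the paper uses a connected subshift of $[0,1]^{\mathbb{N}}$, namely the image of a strictly ergodic Jewett--Krieger model under a map $x\mapsto(f(x),f(Tx),f(T^2x),\ldots)$), and the same correct remark that the common near-return times must depend on the pair, since a single sequence working uniformly would force rigidity of $\nu$. But there is a genuine gap: everything after ``the heart of the argument'' is a statement of what must be achieved, not an argument. Worse, the one concrete mechanism you propose --- ``periodically returning every column to a common base configuration at controlled, increasingly sparse times'' --- is exactly a uniform synchronization scheme, which you yourself correctly rule out two sentences later. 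So the proposal, as written, both omits and (in its only concrete suggestion) contradicts the step that constitutes the actual content of the theorem.

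What fills this gap in the paper is a specific combinatorial device, and it is also where zero entropy enters in an essential way. At stage $n$ one builds an Alpern tower of height about $H$, purifies columns by the current partition, and assigns to each block of each column a \emph{distinct} subset $A\subseteq\{0,\ldots,T-1\}$ drawn from a family $I$ with two properties (Lemma \ref{lem:weiss-blocks}, a variant of a lemma of Weiss): $|I|\geq 2^{\delta T}$, and for any $A,B\in I$ and any shift $k\leq\frac{9T}{10}$ one has $A\cap(B+k)\neq\emptyset$. Zero entropy is precisely what guarantees that $2^{\delta(\varepsilon\rho/10)H}$ exceeds the number of column names, so that distinct blocks can receive distinct sets. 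A marker word $a^*$ containing every $n$-block is planted at the positions of the assigned set; then for \emph{any} pair of points, in \emph{any} relative position, the intersection property forces their markers to align at some common time, producing the pair-dependent near-return (Lemma \ref{lem:indep-blocks}), while the pair-dependence is automatic because the alignment time comes from $A\cap(B+k)$ and varies with the pair. Two further ingredients you would also need: the slow multiplicative ``drift'' (the factors $(k-1)\rho$ near column tops and bottoms) that keeps the image connected and makes the perturbations harmless to previously achieved goodness (Lemma \ref{lem:perturbed-blocks}), and the control $\mu(f_n\neq f_{n+1})<\varepsilon(n)$ ensuring the limit function still generates, so the measurable isomorphism survives the limit --- this last point is not ``for free'' from cutting-and-stacking, since the partitions are being modified at every stage.
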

This construction is related to the construction in B. Weiss \cite{Weiss98}.
For any zero entropy measure preserving system, that construction
produces, as a by-product, a topological model in which every pair
is two-sided recurrent in the product system. However, that is a far
weaker statement than forward recurrence. In fact, the realization
in \cite{Weiss98} is on a subshift, which is totally disconnected,
and one cannot hope that such a system will be TP (for then the action
would be equicontinuous, and the invariant measures would have pure
point spectrum). 

As a consequence of Theorem \ref{thm:TP-realization} one gets a new
functional characterization of the vanishing of entropy in a measure
preserving systems: 
\begin{cor}
\label{cor:entropy-characterization}A measure preserving system $(X,\mathcal{B},\mu,T)$
has entropy $0$ if and only if there exists a separable sub algebra
$\mathcal{A}\subseteq L^{\infty}(\mu)$ which separates points and
such that $f\in\left\langle 1,Tf,T^{2}f,\ldots\right\rangle $ for
every $f\in\mathcal{A}$.
\end{cor}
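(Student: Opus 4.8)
The plan is to prove the two implications separately. The nontrivial ``only if'' direction I would deduce from Theorem~\ref{thm:TP-realization}, while the ``if'' direction is a direct ergodic-theoretic argument based on the classical equivalence, recalled in the introduction, between vanishing entropy and the condition $f\in\sigma(Tf,T^{2}f,\ldots)$.

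For the ``only if'' direction I would first assume that $(X,\mathcal{B},\mu,T)$ is ergodic, so that Theorem~\ref{thm:TP-realization} furnishes a topological model $(Y,S)$ with an invariant measure $\nu$ and a measure-theoretic isomorphism $\psi\colon (Y,\nu,S)\to(X,\mathcal{B},\mu,T)$ for which $(Y,S)$ is TP. I would then set $\mathcal{A}=\{g\circ\psi^{-1}:g\in C(Y)\}\subseteq L^{\infty}(\mu)$, i.e. transport $C(Y)$ through the isomorphism. Since $Y$ is compact metric, $C(Y)$ is a separable subalgebra separating the points of $Y$, so $\mathcal{A}$ is a separable subalgebra with $\sigma(\mathcal{A})=\mathcal{B}$ modulo $\mu$-null sets (this is what ``separates points'' should mean here). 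The intertwining $\psi\circ S=T\circ\psi$ shows that the transport map conjugates the Koopman operator of $S$ to that of $T$, and since the uniform norm dominates the $L^{\infty}(\nu)$ norm on $C(Y)$, the sup-norm closed algebra maps into the $L^{\infty}$ closed algebra; hence the TP property $g\in\langle 1,Sg,S^{2}g,\ldots\rangle$ transports to $f\in\langle 1,Tf,T^{2}f,\ldots\rangle$ in $L^{\infty}(\mu)$ for every $f\in\mathcal{A}$.

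For the reverse direction, suppose such an $\mathcal{A}$ exists. For each $f\in\mathcal{A}$, every element of the $L^{\infty}(\mu)$-closed algebra $\langle 1,Tf,T^{2}f,\ldots\rangle$ is an essential-supremum limit of polynomials in $Tf,T^{2}f,\ldots$, hence is measurable with respect to $\sigma(Tf,T^{2}f,\ldots)=\bigvee_{n\ge 1}T^{-n}\sigma(f)$; the hypothesis therefore gives $\sigma(f)\subseteq\bigvee_{n\ge 1}T^{-n}\sigma(f)$. By the Kolmogorov--Sinai theorem in its conditional form, the entropy of the factor $\mathcal{D}_{f}=\bigvee_{n\ge 0}T^{-n}\sigma(f)$ equals $H\!\left(\sigma(f)\,\middle|\,\bigvee_{n\ge 1}T^{-n}\sigma(f)\right)$, which vanishes because $\sigma(f)$ is contained in the conditioning $\sigma$-algebra. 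Thus $\mathcal{D}_{f}$ lies in the Pinsker $\sigma$-algebra $\Pi$ of $(X,\mathcal{B},\mu,T)$. The key bootstrap is now that $\Pi$ is itself a $T$-invariant $\sigma$-algebra: since $\sigma(f)\subseteq\mathcal{D}_{f}\subseteq\Pi$ for every $f\in\mathcal{A}$, it follows that $\mathcal{B}=\sigma(\mathcal{A})\subseteq\Pi$, whence $\Pi=\mathcal{B}$ and $h(T,\mu)=0$. Note that this argument makes no use of ergodicity and so already disposes of the reverse implication for arbitrary systems.

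It remains to remove the ergodicity assumption in the ``only if'' direction, and I expect this to be the main technical obstacle. I would handle it through the ergodic decomposition $\mu=\int\mu_{\omega}\,dP(\omega)$: each component has zero entropy, the ergodic case supplies point-separating algebras with the prediction property along the fibers, and these must be combined with the pullback of $C(\Omega)$ through the factor onto the space $\Omega$ of ergodic components (whose induced action is the identity and is therefore trivially TP). Assembling the fiberwise data into a single separable subalgebra of $L^{\infty}(\mu)$, measurably in $\omega$, is the delicate point; however, the substantive content lies entirely in the ergodic case treated above.
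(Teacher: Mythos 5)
Your ``only if'' direction is correct and is exactly the derivation the paper intends: Theorem \ref{thm:TP-realization} supplies a TP model of the ergodic system, and one transports $C(Y)$ through the isomorphism (the paper, like you, gets nothing beyond the ergodic case from that theorem, so your cautious remarks about ergodic decomposition are not where the trouble lies). The fatal gap is in your ``if'' direction, at the step invoking ``the Kolmogorov--Sinai theorem in its conditional form.'' No such theorem exists for $\sigma$-algebras generated by real-valued functions. The identity $h(T,Q)=H\bigl(Q\mid\bigvee_{n\ge1}T^{-n}Q\bigr)$ is valid for a \emph{countable} partition $Q$ of finite entropy, and the conditioning there is on the past of $Q$ itself. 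The entropy of the factor $\mathcal{D}_{f}$ is the supremum of $h(T,Q)$ over finite partitions $Q\subseteq\sigma(f)$, and for such $Q$ your hypothesis only gives $Q\subseteq\bigvee_{n\ge1}T^{-n}\sigma(f)$, i.e.\ vanishing of the conditional entropy given a much \emph{larger} $\sigma$-algebra than $\bigvee_{n\ge1}T^{-n}Q$; conditioning on more can only decrease entropy, so this is an underestimate of $h(T,Q)$, not its value. Since elements of $\mathcal{A}$ need not be countably valued, there is no way to replace $\sigma(f)$ by a finite-entropy partition and make the formula legitimate.

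The step is not merely unjustified; it is false, and the whole one-function-at-a-time reduction collapses with it. Take the two-sided Bernoulli shift on $\{0,1\}^{\mathbb{Z}}$, with the convention $Tf=f\circ T$, and let $f(x)=\sum_{k\ge0}3^{-k-1}x_{-k}$, an injective encoding of the past coordinates (with the opposite convention, encode the future instead). Then $T^{n}f$ encodes $(x_{k})_{k\le n}$, so $\bigvee_{n\ge1}T^{-n}\sigma(f)=\mathcal{B}\supseteq\sigma(f)$; in fact even the strong, closed-algebra hypothesis holds for this $f$: on the Cantor set which is the essential range of $Tf$, the leading ternary digit $x_{1}$ is a continuous function of $Tf$, whence $f=3Tf-x_{1}\in\left\langle 1,Tf\right\rangle \subseteq\left\langle 1,Tf,T^{2}f,\ldots\right\rangle$ by Stone--Weierstrass. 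Nevertheless $\mathcal{D}_{f}=\bigvee_{n\ge0}T^{-n}\sigma(f)=\mathcal{B}$ is a factor of entropy $\log2$, so $\mathcal{D}_{f}\not\subseteq\Pi$ and your bootstrap fails. (This does not contradict the corollary, since this single $f$ does not separate points; but it shows that predictability of each $f\in\mathcal{A}$ separately --- which is all your argument uses --- cannot imply zero entropy.) This is precisely why the classical criterion recalled in the introduction demands $f\in\sigma(Tf,T^{2}f,\ldots)$ for \emph{every} measurable $f$. Any correct proof of the ``if'' direction must use the hypothesis simultaneously for all members of the point-separating \emph{algebra} $\mathcal{A}$, in its uniform form --- for instance by passing to the topological model associated to the $T$-invariant closed algebra generated by $\bigcup_{n\ge0}T^{n}\mathcal{A}$ and then arguing in the spirit of Proposition \ref{pro:continuous-partitions-are-dense} and the proof of Theorem \ref{thm:Zd-determinism}, where the entropy formula is only ever applied to genuinely countable, finite-entropy partitions arising from predictable functions.
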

Next, we discuss the notion of TP for $\mathbb{Z}^{d}$ actions. Such
an action $\{T^{u}\}_{u\in\mathbb{Z}^{d}}$ of $\mathbb{Z}^{d}$ by
homeomorphisms on $X$ is topologically predictable (TP) if $f\in\left\langle 1,T^{u}f\,:\, u<0\right\rangle $
for every $f\in C(X)$; here $<$ is the lexicographical ordering
on $\mathbb{Z}^{d}$. One can also work with other orderings, e.g.
lexicographic orderings with respect to other coordinate systems.
One may ask whether this notion is independent of the generators (the
lexicographic ordering certainly is not). It is not; even in dimension
1, the property TP depends on the generator, i.e. TP for $T$ does
not imply it for $T^{-1}$. Thus TP is a property of a group action
and a given set of generators.

The proof in \cite{KaminskiSiemaszkoSzymanski2005,KaminskiSiemaszkoSzymanski03}
that TP implies 0 entropy for a single transformation used the non-trivial
theory of extreme partitions and entropy pairs. In section \ref{sub:Zd-TP}
we give a new and direct argument for this implication, which is somewhat
more transparent. Furthermore, our proof can be used to generalize
the result to actions of $\mathbb{Z}^{d}$. 
\begin{thm}
\label{thm:Zd-determinism}For a $\mathbb{Z}^{d}$-action, TP implies
zero topological entropy. 
\end{thm}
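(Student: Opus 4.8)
The plan is to deduce the topological statement from its measure-theoretic counterpart via the variational principle, which for $\mathbb{Z}^d$-actions gives $h_{\mathrm{top}}(T)=\sup_\mu h_\mu(T)$, the supremum over invariant Borel probability measures. Thus it suffices to show that every invariant measure $\mu$ has $h_\mu(T)=0$. I would fix such a $\mu$ and reinterpret TP measure-theoretically as the statement that the ``present'' is measurable with respect to the lexicographic ``past.'' Writing $\mathcal{B}$ for the Borel $\sigma$-algebra and $\mathcal{P}=\bigvee_{u<0}T^u\mathcal{B}$ for the $\sigma$-algebra generated by the strict lexicographic past, the goal of the first step is the inclusion $\mathcal{B}\subseteq\mathcal{P}$ modulo $\mu$.

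This first step is the easy one. Given $f\in C(X)$, TP provides polynomials $P_n$ in finitely many of the functions $T^u f$ with $u<0$ such that $P_n\to f$ uniformly. Each $P_n$ is $\mathcal{P}$-measurable, and since $\|g\|_{L^2(\mu)}\le\|g\|_\infty$, the convergence also holds in $L^2(\mu)$; as an $L^2$-limit of $\mathcal{P}$-measurable functions, $f$ is itself $\mathcal{P}$-measurable modulo $\mu$. Because $C(X)$ generates $\mathcal{B}$, this yields $\mathcal{B}\subseteq\mathcal{P}$, equivalently $\mathcal{B}=\bigvee_{u<0}T^u\mathcal{B}$ modulo $\mu$.

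It remains to see that this measurable determinism forces $h_\mu(T)=0$. For a finite partition $\gamma$ I would compute $h_\mu(T,\gamma)$ through the boxes $\Lambda_N=\{0,\dots,N-1\}^d$ (a F\o lner sequence) by enumerating $\Lambda_N$ in lexicographic order and applying the chain rule to $H_\mu\big(\bigvee_{v\in\Lambda_N}T^v\gamma\big)$. Each term is a conditional entropy $H_\mu\big(\gamma\mid\bigvee_{w}T^{w}\gamma\big)$ where $w$ ranges over the already-enumerated, hence lexicographically negative, translates lying in the box; as $N\to\infty$ these conditioning families exhaust the past for all but a boundary fraction $o(N^d)$ of sites, so monotonicity of conditional entropy together with a Ces\`aro argument give the lexicographic past formula $h_\mu(T,\gamma)=H_\mu\big(\gamma\mid\bigvee_{u<0}T^u\gamma\big)$. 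Passing to a refining generating sequence of partitions identifies $h_\mu(T)$ with $\sup_\gamma H_\mu(\gamma\mid\mathcal{P})$; but $\mathcal{B}\subseteq\mathcal{P}$ makes every such conditional entropy vanish, so $h_\mu(T)=0$ and hence $h_{\mathrm{top}}(T)=0$.

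I expect the main obstacle to be the $\mathbb{Z}^d$ lexicographic past entropy formula in the last step, rather than the routine $L^2$ argument or the variational principle. For $d=1$ it is the classical Kolmogorov--Sinai identity $h_\mu(T,\gamma)=H_\mu\big(\gamma\mid\bigvee_{i\ge1}T^{-i}\gamma\big)$; the content of the generalization is that the lexicographic order on $\mathbb{Z}^d$ supplies a consistent notion of ``past'' for which the box-averaged conditional entropies converge, and the care lies in controlling the boundary sites of $\Lambda_N$ and in passing from a single partition's past to the full past $\mathcal{P}$ via a generating sequence. This is precisely the point at which the one-dimensional argument, rephrased to use only the order structure rather than extreme partitions or entropy pairs, extends transparently to $\mathbb{Z}^d$.
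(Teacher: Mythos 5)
Your first two steps are fine in spirit, but the final assembly contains a genuine error, and it occurs precisely at the point where the paper has to do real work. Since the action is by homeomorphisms, $T^u\mathcal{B}=\mathcal{B}$ for every $u$, so your ``past'' $\mathcal{P}=\bigvee_{u<0}T^u\mathcal{B}$ is just $\mathcal{B}$: the inclusion $\mathcal{B}\subseteq\mathcal{P}$ you establish in the first step is vacuous, and $H_\mu(\gamma\mid\mathcal{P})=0$ holds for every partition of every system. Consequently the identity you invoke at the end, $h_\mu(T)=\sup_\gamma H_\mu(\gamma\mid\mathcal{P})$, cannot be correct --- it would show that every homeomorphism action whatsoever has zero entropy. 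The correct lexicographic-past formula (which your box/chain-rule sketch does give) is $h_\mu(T,\gamma)=H_\mu\bigl(\gamma\mid\bigvee_{u<0}T^u\gamma\bigr)$, with the conditioning on the past of $\gamma$ \emph{itself}; and since conditional entropy is decreasing in the conditioning algebra, knowing $\gamma\subseteq\mathcal{P}$ mod $\mu$ tells you nothing about $H_\mu\bigl(\gamma\mid\bigvee_{u<0}T^u\gamma\bigr)$ --- the monotonicity runs the wrong way.

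What TP actually gives (and what your uniform-approximation argument actually proves, though you then record only the weaker trivial statement) is that each $f\in C(X)$ is measurable with respect to $\sigma(T^uf:u<0)$, the past of $f$ itself. This forces $h_\mu(T,\gamma)=0$ only for partitions $\gamma$ that arise, mod $\mu$, as level-set partitions of continuous functions. The missing ingredient is the bridge from these special partitions to arbitrary finite partitions, and this is exactly what the paper's Proposition \ref{pro:continuous-partitions-are-dense} supplies: countable, finite-entropy \emph{continuous} partitions are dense in the Rohlin metric, and $\gamma\mapsto h_\mu(T,\gamma)$ is continuous in that metric, so the vanishing of entropy propagates from continuous partitions to all partitions. (A second subtlety your route would also have to face: the level sets of a generic continuous function form an uncountable partition of infinite entropy, so one must construct continuous functions taking countably many values almost everywhere; this is the content of the paper's Urysohn-type construction.) Your variational-principle reduction and your $\mathbb{Z}^d$ past-entropy formula are both sound and agree with the paper's strategy; the proof fails only, but fatally, at the conflation of the past of $\gamma$ with the trivial past of $\mathcal{B}$.
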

There is a rather complete theory of entropy, developed by Ornstein
and Weiss, for actions of amenable groups on probability spaces. One
feature which is absent from the general theory (and which we utilized
for $\mathbb{Z}$ and $\mathbb{Z}^{d}$ actions) is a good notion
of the {}``past'' of an action, and the ability to represent the
entropy of a partition as a conditional entropy of the partition with
respect to the {}``past''. However by analogy to the abelian case
the following question is natural:
\begin{problem}
\noindent Suppose an infinite discrete amenable group $G$ acts by
homeomorphisms on $X$. Let $S\subseteq G$ be a sub semigroup not
containing the unit of $G$, and such that $S\cup S^{-1}$ generates
$G$. Suppose that for every $f\in C(X)$ we have $f\in\left\langle 1,sf\,:\, s\in S\right\rangle $.
Does this imply that $h(X,G)=0$? 
\end{problem}

\subsection{\label{sub:Prediction-for-symbolic}Prediction for symbolic systems}

Let $\Sigma$ be a finite set of symbols and consider the space $\Sigma^{\mathbb{Z}}$
of bi-infinite sequences over $\Sigma$. Denote by $\sigma:\Sigma^{\mathbb{Z}}\rightarrow\Sigma^{\mathbb{Z}}$
the shift map. A symbolic system is a closed, non-empty, $\sigma$-invariant
subset of $\Sigma^{\mathbb{Z}}$.

Let $X\subseteq\Sigma^{\mathbb{Z}}$ be a subshift and let $x^{-}\in\Sigma^{-\mathbb{N}}$,
where $\mathbb{N}=\{1,2,3\ldots\}$; for $x\in\Sigma^{\mathbb{Z}}$
we also write $x^{-}=x|_{-\mathbb{N}.}$. A finite or infinite sequence
$x^{+}\in\cup_{0\leq n\leq\infty}\Sigma^{n}$ is an admissible extension
of $x^{-}$ (with respect to $X$) if the concatenation $x^{-}x^{+}$
is in $X$. If $h(X)=0$ then $h(\mu)=0$ for every invariant measure
$\mu$ on $X$, and so there is a set of points $X_{0}\subseteq X$,
having full measure with respect to every invariant measure, such
that $x^{-}$ has a unique extension for every $x\in X_{0}$; that
is, if $y\in X_{0}$ is another point, then $y^{-}=x^{-}$ implies
$x=y$. A natural question is whether this can occur for every $x,y\in X$.
The answer is no: in fact, it is well known that the only subshifts
for which every admissible past $x^{-}$ admits a unique continuation
are finite unions of periodic orbits (we give a proof in lemma \ref{lem:no-perfect-prediction}).

However, there do exist subshifts where each $x^{-}\in\Sigma^{-\mathbb{N}}$
has only finitely many extensions; the best known are probably the
Sturmian subshifts. Such subshifts must have zero entropy. It turns
out that such systems are not uncommon, and that entropy is again
the only restriction to the dynamics of their invariant measures:
\begin{thm}
\label{thm:symbolic-realization}Every ergodic measure-preserving
system with entropy zero is isomorphic to a shift-invariant Borel
measure on a uniquely ergodic subshift $X\subseteq\{0,1\}^{\mathbb{Z}}$
with the property that every $x^{-}\in\{0,1\}^{-\mathbb{N}}$ has
at most two infinite extensions.
\end{thm}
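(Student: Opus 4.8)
The plan is to construct the subshift by an explicit inductive coding (a cutting‑and‑stacking, or adic, construction) that simultaneously achieves three things: faithfulness (a measure isomorphism with the given system), unique ergodicity, and the at‑most‑two‑futures property. The guiding picture is the Sturmian case. There the coding map from the rotation is injective off the countable orbit of the partition boundary, and on that orbit exactly two codes occur; consequently every past either pins down a single point of the circle, giving a unique future, or lies over a boundary point, giving exactly two. I want to reproduce this ``almost everywhere deterministic, with a binary branching locus'' picture while encoding an arbitrary zero‑entropy system in place of a rotation.

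First I would record the measure‑theoretic input. Since $h(\mu)=0<\log 2$, Krieger's finite generator theorem gives a two‑set generating partition, so $(X,\mathcal B,\mu,T)$ is already measure‑isomorphic to a shift‑invariant measure on a subshift of $\{0,1\}^{\mathbb Z}$; moreover $h(\mu)=0$ is exactly the statement that the present coordinate is measurably determined by the past, so the set of pasts admitting a unique future has full measure. These facts identify what must be encoded, but neither unique ergodicity nor the topological at‑most‑two‑futures statement follows from them, and the latter is the real content.

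Next I would build $X$ as a limit of a refining sequence of Rokhlin towers coded over $\{0,1\}$, whose levels are labelled by words $B^{(n)}$ with $B^{(n+1)}$ a concatenation of copies of $B^{(n)}$ together with controlled spacer labels. The labelling is chosen so that: (i) the blocks are \emph{recognizable}, i.e. every sequence in the limit admits a unique nested parsing into level‑$n$ blocks for each $n$, and the level‑$(n+1)$ boundaries form a subset of the level‑$n$ boundaries; (ii) the generated partition separates tower levels and hence generates $\mathcal B$, so the coding is a measure isomorphism; and (iii) the frequencies of all words are forced to converge uniformly, which yields unique ergodicity and (by the variational principle, the alphabet being finite) topological entropy equal to the zero measure entropy. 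Property (i) is what makes the system almost deterministic: a past determines, for every $n$, how far the level‑$n$ block containing the origin has progressed, so the future is determined once the full nested parse of the past is known.

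The at‑most‑two‑futures property is the main obstacle, and this is where I expect the bulk of the work to lie. Given a past $x^-$, reading longer initial segments determines the parse at coarser and coarser levels, the admissible futures correspond to completions of this parse, and their cylinder sets shrink as $n$ grows. I would arrange the construction so that at each scale there is at most a single binary choice, namely whether the current high‑level block is closed off or repeated, and so that a binary choice at level $n$ forces all finer choices, exactly as the two codes of a Sturmian boundary point propagate. The delicate point is to \emph{align} these branch loci across scales so they do not accumulate: one must guarantee that whenever two futures are consistent with $x^-$ they differ only through one boundary ambiguity and agree thereafter, so that the intersection of the shrinking admissible‑future cylinders contains at most two points. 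This is precisely where $h(\mu)=0$ is essential, since subexponential word growth lets all the combinatorial data of the system be serialized into the binary labels while keeping the per‑scale branching at a single bit; any positive entropy would force the admissible continuations of a typical past to proliferate. Checking that the resulting unique invariant measure is the pushforward of $\mu$, and that no past acquires a third future, then completes the argument.
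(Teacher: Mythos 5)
Your proposal correctly isolates the crux — the at-most-two-futures property — but at exactly that point it stops being a proof and becomes a restatement of the goal. Saying that one must ``arrange a single binary choice at each scale,'' ``align the branch loci across scales so they do not accumulate,'' and that zero entropy ``lets all the combinatorial data be serialized into the binary labels while keeping the per-scale branching at a single bit'' names the required conclusion without supplying any mechanism that achieves it. Worse, the hierarchical structure you commit to cannot carry the theorem in the stated generality: taking $B^{(n+1)}$ to be a concatenation of copies of a \emph{single} block $B^{(n)}$ plus spacers is a rank-one construction, and rank-one systems are a proper subclass of zero-entropy ergodic systems (they have simple spectrum, for instance), so no such tower can be measure-isomorphic to an arbitrary zero-entropy system. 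If you instead allow several blocks per level to recover generality, then at a block boundary the set of admissible continuations is no longer binary, and controlling it is precisely the unsolved alignment problem. There is also a soft spot in the deterministic part of your picture: even granting recognizability of bi-infinite points, a one-sided past whose origin sits at a block boundary \emph{for every level} has its parse undetermined, and the number of its futures is governed by the combinatorics of how level-$n$ blocks may follow one another — this is exactly where uncountably many futures can appear, and nothing in the proposal rules it out.

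For comparison, the paper's mechanism is quite different and is worth absorbing, because it shows what a solution looks like. One starts (via Weiss/Jewett--Krieger) with $\mu$ already sitting on a minimal uniquely ergodic subshift, and inductively perturbs a sequence of two-set clopen generating partitions $\mathcal{P}_n\rightarrow\mathcal{P}_*$. At stage $n$ one chooses a marker word $c$ which in $X_n$ is \emph{always} followed by a fixed word $b_{\textrm{old}}$ (Corollary \ref{cor:prediction-word}), and a word $b_{\textrm{new}}\notin L(X_n)$ of the same length, agreeing with $b_{\textrm{old}}$ at both ends, taken from the positive-entropy SFT envelope $Y_n$. Replacing $b_{\textrm{old}}$ by $b_{\textrm{new}}$ after \emph{selected} marker occurrences inside an Alpern tower serves two purposes at once: the pattern of replacements lying in the past of a point encodes the entire $\mathcal{P}_n$-name of its column (here zero entropy enters \emph{quantitatively}: the number of $(\vee_{i=0}^{4N}T^{-i}\mathcal{P}_n)$-names is $<2^{\alpha N/4}$, so it fits in the available marker bits), and therefore a $2N$-past determines the forward $\mathcal{P}_n$-name; the sole residual ambiguity is whether the one marker occurrence near the origin is followed by $b_{\textrm{old}}$ or $b_{\textrm{new}}$ — a single bit that does not propagate, since the two words have equal length and matching ends. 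The invariant $L_{m(n)}(X_n)=L_{m(n)}(X_{n+1})$ with $m(n)\geq k(n)+n$ then freezes the two-futures property into the limit system $X_*$. Your proposal would need an analogue of this encode-in-the-past device (or some substitute) before it could be called a proof.
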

This may be viewed as a sharpening of the Jewett-Krieger generator
theorem, which states that every measure-preserving system with finite
entropy $h$ can be realized as the unique invariant measure on a
uniquely ergodic subshift on $k$ symbols, provided $\log k>h$. In
zero entropy, one cannot use less than 2 symbols. This theorem says
that one can do the next best thing.

\subsection{\label{sub:Non-invertibility-and-entropy}Non-invertability and entropy }

Consider a symbolic system $X\subseteq\Sigma^{\mathbb{N}}$ (note
that we now have a one-sided shift), and an invariant probability
measure $\mu$ on $X$. Recall that, since the partition of $X$ according
to the first symbol generates the $\sigma$-algebra, the entropy $h(\mu)$
is the average of the entropy of the conditional measures, given $x$,
induced on the preimage set $\sigma^{-1}(x)$. Thus if $h(\mu)>0$
then with positive probability $\sigma^{-1}(x)$ is not concentrated
on a single point, and consequently there is a large set of points
in $X$ with multiple preimages. It is therefore natural to ask what
{}``degree'' of non-invertability is necessary to guarantee positive
entropy.

One plausible condition is that each point have multiple preimages;
we call such a system \emph{everywhere non-invertible}. Indeed, for
subshifts this is enough to imply positive entropy, because, for symbolic
systems, everywhere non-invertability implies a stronger condition:
the preimage of every point has diameter $>\delta$ for some $\delta>0$.
Whenever this condition is satisfied we say that the system has \emph{no
small preimages. }An easy argument shows that a map with no small
preimages has entropy at least $\log2$ (see proposition \ref{pro:large-preimages-implies-entropy}
below). 

Everywhere non-invertability does not guarantee positive entropy in
general, though in some special cases it does, e.g. maps of the interval
\cite{Bobok02}. One would hope to find additional hypotheses which,
together with everywhere non-invertability, imply positive entropy.
One candidate is the presence of a globally supported ergodic measure.
In an everywhere non invertible system there is always an open set
of points whose preimages have diameter which is bounded below by
some positive constant, and when there is a globally supported ergodic
measure, almost every orbit spends a positive fraction of its time
in this set. One would hope to use this fact to construct many well-separated
orbits. Eli Glasner has raised the question of whether this hypothesis
indeed implies positive entropy. We show that it does not:
\begin{example}
\label{exa:non-invertible-example}There exists a zero entropy, everywhere
non-invertible systems with a globally supported ergodic measure.
\end{example}
For an integer $k>0$, we say that a system $(X,T)$ is at least \emph{$k$-to-one}
if the preimage set of every point is of size at least $k$. J. Bobok
has shown that if a map of the circle (or the interval) is $k$-to-one,
then $h(T)\geq\log k$, and has asked if this holds in general, at
least under the assumption that there are no small preimages. We can
give a negative answer to this:
\begin{example}
\label{exa:non-invertible-example-2}There exists an infinite-to-one
system $(X,T)$ with no small preimages, and which supports a global
ergodic invariant measure, but $h(X,T)=\log2$.
\end{example}
There seems to be no obstruction in our examples to making the measures
weakly mixing, and possibly strong mixing, but we do not pursue this
here.

The question remains whether such examples exist for a continuous
map on a manifold. For smooth maps they do not, see \cite{Bobok05}. 
\begin{acknowledgement*}
This work was done during the author's graduate studies. I would like
to thank Benjamin Weiss for his constant encouragement and for raising
some of the questions addressed here.
\end{acknowledgement*}

\section{\label{sec:Preliminaries}Notation}

We will use freely standard facts about topological dynamics and entropy
which can be found e.g. in \cite{Walters82}. This section contains
some further notation for dealing with sequence spaces.

Let $\Sigma$ be a set and write $\Sigma^{*}$ for the set of all
finite words over $\Sigma$. The $i$-th letter of a word $a\in\Sigma^{*}$
is denoted by $a(i)$. If $a=a(1)a(2)\ldots a(k)$ then $k$ is the
length of $a$ and is denoted by $\ell(a)$. We denote concatenation
the of words $a,b\in\Sigma^{*}$ by $ab$. 

Similarly, we define the spaces of one-sided sequences $\Sigma^{\mathbb{N}},\Sigma^{-\mathbb{N}}$
(we use the convention $\mathbb{N}=\{1,2,3,\ldots\}$) and of two-sided
sequences, $\Sigma^{\mathbb{Z}}$. If a topology is given on $\Sigma$
these sequence spaces carry the product topology; for finite $\Sigma$
we take the discrete topology for $\Sigma$. We denote by $\sigma$
the shift map on both these spaces which is defined by the formula
$(\sigma(x))(i)=x(i+1)$; this map restricted to $\Sigma^{\mathbb{N}}$and
$\Sigma^{\mathbb{Z}}$ is continuous and onto, and is a homeomorphism
in the two-sided case. In the one sided case the preimage set of every
point is identified with $\Sigma$. We also define the shift on $\Sigma^{*}$
in the obvious way, by\[
\sigma(x(1)x(2)\ldots x(k))=x(2)x(3)\ldots x(k)\]
(note that $\sigma^{n}(ab)=(\sigma^{n}a)b$ if $n\leq\ell(a)$ but
is equal to $\sigma^{n-\ell(a)}(b)$ if $\ell(a)<n\leq\ell(a)+\ell(b)$.
Otherwise it is the empty word). When concatenating infinite sequences,
we adopt the convention that, if $x\in\Sigma^{-\mathbb{N}}$ and $y\in\Sigma^{\mathbb{N}}$,
then $xy\in\Sigma^{\mathbb{Z}}$ is the sequence $z$ with $z(i)=x(i)$
for $i<0$ and $z(i)=y(i+1)$ for $i\geq0$ (note that $0\notin\mathbb{N}$,
which is the reason for this shift of $y$).

For a word $x$ (finite or infinite), if $x=ab$ then $a$ is called
a \emph{front segment} of $x$ (if $\ell(a)=k$ then $a$ is a front
$k$-segment of $x$), and $b$ a \emph{back segment} of $x$. For
$a,b\in\Sigma^{*}$ we say that $a$ is a subword of $b$ at index
$i$ if $i\leq\ell(b)-\ell(a)+1$ and $a(j)=b(i+j)$ for $j=1,\ldots,\ell(a)$.
The index $i$ is called the \emph{alignment} of $a$ in $b$. If
such an $i$ exists we say that $a$ appears in $b$, or that it is
a subword of $b$. 

We denote by $[i;j]$ the segment of consecutive integers $[i,j]\cap\mathbb{Z}$,
and denote by $x|_{[i;j]}=x(i)x(i+1)\ldots x(j)$ the subword of $x$
determined by $[i;j]$, provided $x$ is long enough for this to make
sense.

All measures are assumed to be Borel probability measures.

\section{\label{sec:Topological-predictability}Topological predictability}

\subsection{\label{sub:Realizing-TP-systems}Realization of measures on TP systems}

A topologically predictable system has zero topological entropy, and
therefore, by the variational principle, every invariant measure on
it has entropy zero. In this section we prove Theorem \ref{thm:TP-realization},
showing that this is the only restriction on invariant measures. The
construction is rather technical. We remark that this section is not
used in the sequel.

A point $x$ in a dynamical system $(X,T)$ is forward recurrent if
$T^{n(k)}x\rightarrow x$ for some sequence of times $n(k)\rightarrow\infty$.
Note that if every point in a system is forward recurrent then every
closed subset $A\subseteq X$ which is forward invariant, i.e. $TA\subseteq A$,
is invariant, i.e. $T^{-1}A=TA=A$. 

In order to construct a TP system supporting a given measure we shall
construct an isomorphic measure on a topological system $(X,T)$ for
which every point in $X\times X$ is forward recurrent. Indeed, by
the remark above, this implies that every forward invariant, closed
equivalence $R\subseteq X\times X$ is also invariant under $T^{-1}$,
and this is equivalent to the property that every factor is invertible,
so $(X,T)$ is topologically predictable \cite{KaminskiSiemaszkoSzymanski03}.
Our construction cannot be symbolic since since infinite symbolic
systems always contain forward-asymptotic pairs. We shall instead
construct a connected subshift of $[0,1]^{\mathbb{N}}$.

Let $(X,\mathcal{B},\mu,T)$ be a measure-preserving system with zero
entropy. We wish to construct a space $Y$ and homeomorphism $S:Y\rightarrow Y$
for which every pair is forward recurrent and which supports a measure
isomorphic to $(X,\mathcal{B},\mu,T)$. 

For the construction we may assume, by e.g. \cite{Weiss98}, that
$T$ is a minimal, topologically weak mixing, strictly ergodic homeomorphism
of a totally disconnected metric space $X$, and that there exists
a clopen generator for $T$. 

Given a measurable function $f:X\rightarrow[0,1]$, let $f^{(m)}:X\rightarrow[0,1]^{m}$
denote the function $x\mapsto(f(x),f(Tx),\ldots,f(T^{m-1}x))$, and
similarly let $f^{(\infty)}:X\rightarrow[0,1]^{\mathbb{N}}$ denote
the map $x\mapsto(f(x),f(Tx),f(T^{2}x),\ldots)$. We use the notation
$\left\Vert a\right\Vert _{\infty}=\sup|a(i)|$ for $a\in\mathbb{R}^{m}$
or $a\in\mathbb{R}^{\mathbb{N}}$. 

For integers $m,r$, we say that $f$ is $(m,r)$-good if there is
a subset $X_{f,m,r}\subseteq X$ of full measure such that, for every
$x',x''\in X_{f,m,r}$, there is an integer $0<k<r$ (which may depend
on $x',x''$) satisfying \begin{eqnarray*}
\left\Vert f^{(m)}(x')-f^{(m)}(T^{k}x')\right\Vert _{\infty} & < & \frac{1}{m}\\
\left\Vert f^{(m)}(x'')-f^{(m)}(T^{k}x'')\right\Vert _{\infty} & < & \frac{1}{m}.\end{eqnarray*}
Suppose that $f$ is $(m,r(m))$-good for some sequence $r(m)$. Setting
$X_{0}=\cap_{m=1}^{\infty}X_{f,m,r(m)}$, the relation above holds
for every $x',x''\in X_{0}$ and all $m\in\mathbb{N}$. If we set
$\nu=f^{(\infty)}\mu$ and $Y=\supp\nu\subseteq[0,1]^{\mathbb{N}}$,
it follows that each pair of points in $Y$ is forward recurrent for
the shift $\sigma$. Also, $\nu$ is shift invariant on $(Y,\sigma)$,
and $f^{(\infty)}$ is a factor map from $X$ to $Y$, and if the
partition induced by $f$ on $X$ generates for $T$ then this is
an isomorphism. Thus the theorem will follow once we construct a function
$f$ as above.

We construct $f$ by approximation. More specifically, we define a
sequence of functions $f_{n}:X\rightarrow[0,1]$ and integers $r(n)$
such that $f_{n}$ is $(m,r(m))$ good for each $m\leq n$. The sequence
$f_{n}$ will be constructed so that it converges a.e. to a function
$f$, which is clearly $(m,r(m))$ good for $m\in\mathbb{N}$. Also,
each $f_{n}$ will generate for $T$ and we will guarantee that $f$
generates by controlling the speed of convergence of $f_{n}$ to $f$.
The $f_{n}$'s will be continuous and each will take on only finitely
many values, so we may identify them with finite partitions $P_{n}$
of $X$ into clopen sets, where $f_{n}(x)=i$ if and only if $x$
is in the partition element of $P_{n}$ indexed by $i$ (we allow
$i$ to take non-integer values). 

The construction proceeds by induction. Our induction hypothesis will
be that we are given a function $f_{n}$ arising from a finite clopen
generating partition $P_{n}$, and integers $r(1),\ldots,r(n)$, such
that $f_{n}$ is $(m,r(m))$-good for $m=1,\ldots,n$. For any $\varepsilon$,
we will show how to define $f_{n+1}$ and $r(n+1)$ satisfying the
same condition with $n+1$ in place of $n$, and such that \[
\mu(x\in X\,:\, f_{n}(x)\neq f_{n+1}(x))<\varepsilon.\]
By choosing $\varepsilon=\varepsilon(n)$ to decrease rapidly enough
this last condition guarantees that $f_{n}\rightarrow f$ almost surely,
and that $f$ generates for $T$ (see e.g. \cite{Shields73}). 

Suppose then that we are given $f_{n}$, $r(1),\ldots,r(n)$ and $\varepsilon>0$
as above. First, note that the properties of these objects are completely
determined by the itineraries of length $r(n)+n$ associated under
$f_{n}$ to points in $X$, i.e. by the image of $f_{n}^{(r(n)+n)}$.
The following lemma, whose proof we omit, says that the desired properties
of the blocks continue to hold if we modify itineraries in a sufficiently
slow way:
\begin{lem}
\label{lem:perturbed-blocks}For $f_{n},P_{n},r(1),\ldots,r(n)$ as
above, there is a number $0<\rho<\frac{1}{n+1}$ with the following
property. Suppose $y',y''\in[0,1]^{r(n)+n}$ are blocks appearing
in $f_{n}^{(\infty)}(X)$ and $\alpha',\alpha''\in[0,1]^{r(n)+n}$
have the property that $|\alpha'(i)-\alpha'(i+1)|<\rho$ and $|\alpha''(i)-\alpha''(i+1)|<\rho$
for all $1\leq i\leq r(n)+n-1$. Define $z',z''\in[0,1]^{r(n)+n}$
by $z'(i)=\alpha'(i)\cdot y'(i)$ and $z''(i)=\alpha''(i)\cdot y''(i)$.
Then there exists $0<k\leq r(m)$ with $|z'(i)-z'(i+k)|<\frac{1}{m}$
and $|z''(i)-z''(i+k)|<\frac{1}{m}$ for $i=1,2,\ldots,n$. 
\end{lem}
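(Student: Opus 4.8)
The plan is to exploit the finiteness of $f_n$: since $P_n$ is a finite partition and itineraries here have bounded length $r(n)+n$, only finitely many blocks $y$ occur in $f_n^{(\infty)}(X)$, and hence only finitely many pairs $(y',y'')$. This finiteness converts the strict inequalities in the goodness condition into a uniform gap, which in turn leaves room to absorb the multiplicative perturbation by $\alpha',\alpha''$.

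First I would record that each block $y$ appearing in $f_n^{(\infty)}(X)$ is realized at a point of the relevant full-measure good set: the set $\{x\in X : f_n^{(r(n)+n)}(x)=y\}$ is clopen and nonempty, and since $T$ is minimal (so $\mu$ has full support) it has positive measure, hence meets $X_{f_n,m,r(m)}$ for every $m\le n$. Consequently, for every pair $(y',y'')$ of occurring blocks and every $m\le n$, the $(m,r(m))$-goodness of $f_n$ supplies a single integer $0<k<r(m)$ (depending on $m,y',y''$) with $|y'(i)-y'(i+k)|<\frac1m$ and $|y''(i)-y''(i+k)|<\frac1m$ for $i=1,\dots,m$. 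Writing $\gamma_{m,y',y''}$ for the maximum of these finitely many quantities over $i$ and over the two blocks, we have $\gamma_{m,y',y''}<\frac1m$, and taking the minimum over the finitely many pairs and finitely many $m\le n$ yields a uniform gap $\delta:=\min\bigl(\tfrac1m-\gamma_{m,y',y''}\bigr)>0$.

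Next I would estimate the effect of the perturbation, keeping the \emph{same} $k$. Setting $R=\max_{m\le n}r(m)$ (we may assume $r(1)\le\cdots\le r(n)$, so that $i+k\le r(n)+n$ stays within range), the hypothesis $|\alpha'(i)-\alpha'(i+1)|<\rho$ telescopes to $|\alpha'(i)-\alpha'(i+k)|<k\rho\le R\rho$, and likewise for $\alpha''$. Writing $z'(i)-z'(i+k)=\alpha'(i)\bigl(y'(i)-y'(i+k)\bigr)+\bigl(\alpha'(i)-\alpha'(i+k)\bigr)y'(i+k)$ and using $\alpha',y'\in[0,1]$, I obtain
$$|z'(i)-z'(i+k)|\le |y'(i)-y'(i+k)|+|\alpha'(i)-\alpha'(i+k)|<\gamma_{m,y',y''}+R\rho,$$
and the same bound for $z''$.

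Finally I would choose $\rho$. Taking $\rho<\min\bigl(\tfrac{1}{n+1},\,\tfrac{\delta}{R}\bigr)$ forces $R\rho<\delta\le\frac1m-\gamma_{m,y',y''}$, so the displayed bound becomes $|z'(i)-z'(i+k)|<\frac1m$ (and similarly for $z''$) for $i=1,\dots,m$, with the same $k\le r(m)$; the requirement $\rho<\frac1{n+1}$ is built in. This is exactly the asserted conclusion for each $m\le n$. The only delicate point is the passage from ``blocks appearing'' to ``blocks realized at good points,'' which is where minimality (full support of $\mu$) enters; everything else is the elementary product estimate together with the compactness afforded by finiteness of $P_n$, which is what produces the strictly positive gap $\delta$.
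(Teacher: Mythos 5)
You are in an unusual position here: the paper states this lemma with the remark ``whose proof we omit,'' so there is no proof of record to compare yours against; your argument fills that gap rather than paralleling one. Your proof is correct, and its three ingredients are exactly what the authors' one-line gloss (that the properties survive ``sufficiently slow'' modification) presupposes: (i) every block occurring in $f_n^{(\infty)}(X)$ is realized at a point of the good set, because the cylinder $\{x\in X\,:\,f_n^{(r(n)+n)}(x)=y\}$ is clopen and nonempty, and minimality forces the invariant measure to have full support, so this cylinder has positive measure and meets each full-measure set $X_{f_n,m,r(m)}$; (ii) finiteness of the value set of $f_n$ (it comes from the finite clopen partition $P_n$) turns the strict inequalities furnished by $(m,r(m))$-goodness into a uniform gap $\delta>0$ over the finitely many pairs of blocks and the finitely many $m\le n$; (iii) the splitting $z'(i)-z'(i+k)=\alpha'(i)\,(y'(i)-y'(i+k))+(\alpha'(i)-\alpha'(i+k))\,y'(i+k)$ together with telescoping absorbs the multiplicative drift once $\rho<\delta/r(n)$, with the same $k$. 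You also resolved the statement's imprecisions in the only way consistent with its use: the unquantified $m$ must be read as universally quantified over $m\le n$, and the index range $i=1,\ldots,n$ as $i=1,\ldots,m$, since that is what $(m,r(m))$-goodness supplies and is all that the inductive step (showing $f_{n+1}$ is $(m,r(m))$-good for $m\le n$) requires; likewise, assuming $r(1)\le\cdots\le r(n)$ is a harmless normalization because $(m,r)$-goodness is monotone in $r$, and some such convention is needed anyway for the indices $i+k$ in the statement to stay within the block length $r(n)+n$. The one point worth emphasizing, which you correctly flag as the delicate one, is step (i): goodness is a measure-theoretic (almost everywhere) hypothesis while the lemma quantifies over all topologically occurring blocks, and full support of $\mu$ is precisely what bridges the two.
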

Let $Y\subseteq[0,1]^{\mathbb{N}}$ be the symbolic subshift defined
by the property that every block of length $r(n)+n$ in $Y$ appears
in $f_{n}^{(\infty)}(X)$. Note that $Y$ is a shift of finite type
and is irreducible because $X$ is topologically mixing. In particular,
there is an integer $D$ such that given two blocks $a,c$ appearing
in $Y$, there is a block $b_{k}$ for every $k\geq D$ such that
$ab_{k}c$ appears in $Y$. We can also fix a block $a^{*}$ appearing
in $Y$ which contains a copy of every $n$-block in $Y$. Increasing
$D$ or lengthening $a^{*}$ if necessary, so may assume that $D>1/\varepsilon$
and that $a^{*}$ is of length $D$. 

We need the following, which is a specialized version of lemma 2 from
\cite{Weiss98}:
\begin{lem}
\label{lem:weiss-blocks}There exists $\delta>0$ and $T_{0}\in\mathbb{N}$
such that, for all $T\geq T_{0}$, there is a family $I$ of subsets
of $\{0,\ldots,T-1\}$ satisfying
\begin{enumerate}
\item $|I|\geq2^{\delta T}$,
\item For $A\in I$ and distinct $u,v\in A$, we have $|u-v|\geq\frac{10D}{\varepsilon}$,
\item For each $A,B\in I$ and $k\leq\frac{9T}{10}$, we have $A\cap(B+k)\neq0$.
\end{enumerate}
\end{lem}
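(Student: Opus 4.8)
The plan is to build $I$ by the probabilistic (alteration) method, arranging the two hard requirements by separate devices: property (2) will be hard-wired into the sample space, while the strong intersection property (3) will hold automatically for almost every pair and be repaired for the few exceptional pairs by deleting one set from each.

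First I would fix the separation scale $s=\lceil 10D/\varepsilon\rceil$ and cut $\{0,\dots,T-1\}$ into $N=\lfloor T/2s\rfloor$ consecutive windows of length $2s$. Let $\Omega$ be the collection of all sets $A$ that pick exactly one point, from the left half, in each of the first $N$ windows. Every such $A$ has all gaps at least $s+1>10D/\varepsilon$, so (2) holds for all of $\Omega$ for free; moreover $|\Omega|=s^{N}=2^{(\log_2 s)T/(2s)}$ is exponentially large. I would then draw $M=\lceil 2^{2\delta T}\rceil$ elements $A_1,\dots,A_M$ of $\Omega$ independently and uniformly, where $\delta>0$ is a small constant to be fixed at the very end.

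The key estimate is that for fixed independent $A,B\in\Omega$ and a fixed shift $0\le k\le 9T/10$,
\[
\Pr\bigl[A\cap(B+k)=\emptyset\bigr]\le e^{-cT/s^{2}}
\]
for some absolute $c>0$. This is where the cutoff $k\le 9T/10$ earns its keep: it forces at least about $T/10$ of the windows of $B$ to be shifted back into $\{0,\dots,T-1\}$, so there are $\Omega(T/s)$ windows in which the uniform point of $B+k$ has probability at least $1/s$ of hitting the independently chosen point of the window of $A$ it overlaps; independence across windows then gives $(1-1/s)^{\Omega(T/s)}\le e^{-cT/s^{2}}$. The one genuinely fiddly step, which I would carry out carefully, is the alignment bookkeeping: tracking which window of $A$ a shifted $B$-point can land in, and checking that a positive fraction of windows really do offer an independent chance of coincidence of order $1/s$.

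Finally I would run the deletion. There are at most $M^{2}$ ordered pairs and at most $T$ shifts, so the expected number of triples $(A_i,A_j,k)$ with $i\ne j$ and empty intersection is at most $M^{2}T\,e^{-cT/s^{2}}$. The main point, and the obstacle one must respect, is that the per-pair exponent $c/s^{2}$ is far too weak to push a union bound through a family of near-maximal exponential size; the way out is to keep $\delta$ well below it. Taking $\delta<c/(2s^{2}\ln 2)$ makes $M^{2}Te^{-cT/s^{2}}\le M/2$ once $T$ is large (a choice still consistent with $M\le|\Omega|$), so some realization has at most $M/2$ bad pairs; deleting one set from each leaves a subfamily $I$ with $|I|\ge M/2\ge 2^{\delta T}$, which gives (1), while (2) is inherited from $\Omega$ and (3) now holds for every distinct pair. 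I should note that (3) cannot literally hold for $A=B$ and $0<k<s$, since gaps exceeding $s$ forbid short self-differences; it is understood, and used, for distinct $A,B$.
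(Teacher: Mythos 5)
Your overall plan (a structured random family, then deletion to repair rare bad pairs) is a sensible route, and it is worth noting that the paper itself gives no proof of this lemma at all --- it simply invokes Lemma 2 of \cite{Weiss98} --- so a correct self-contained argument would be a real addition. Unfortunately your proof breaks at the key estimate, and it breaks precisely in the ``alignment bookkeeping'' you deferred. Because every $A\in\Omega$ places its points in the left halves of one \emph{fixed} partition into windows of length $2s$, every set in $\Omega$ is contained in the periodic set $S=\{x:\,x\bmod 2s\in[0,s)\}$. If $k\equiv s\pmod{2s}$, then $S\cap(S+k)=\emptyset$: all points of $B+k$ lie in right halves while all points of $A$ lie in left halves. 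Hence $\Pr\bigl[A\cap(B+k)=\emptyset\bigr]=1$ for such $k$, not $\le e^{-cT/s^{2}}$. Quantitatively, writing $r=k\bmod 2s$, the support of a shifted $B$-point meets the support of the $A$-point of the window it overlaps in exactly $|s-r|$ positions, so the per-window collision probability is $|s-r|/s^{2}$; this is \emph{at most} $1/s$ (attained at $r=0$) and equals $0$ at $r=s$, so your claim of collision probability ``at least $1/s$'' is reversed, and false in the worst case. Since the interval $[T/10,\,9T/10]$ alone contains on the order of $T/s$ shifts with $k\equiv s\pmod{2s}$, property (3) fails at those shifts for \emph{every} pair of sets from $\Omega$ (distinct or not), and no deletion step can remove a failure that occurs with probability one. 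Your deletion arithmetic at the end is fine; it is repairing the wrong thing. Note also that giving each set its own random phase does not help: for any pair of phases there remain residues $k$ at which the two supports are disjoint.

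The missing idea is that the sample space must have no rigid internal period. For example, generate each $A$ as a renewal set: first point in $\{0,\dots,2s\}$ and successive gaps chosen independently and uniformly from $\{s,s+1,\dots,2s\}$. Separation (2) still holds for every realization, the number of realizations is still exponential in $T/s$, and --- crucially --- the gap distribution is aperiodic (its support has greatest common divisor $1$), so for any shift $k\le 9T/10$ two independent such sets intersect except with probability $e^{-cT/s^{2}}$; proving this requires a regeneration argument for a pair of independent renewal processes in place of your window-by-window independence, and is the one nontrivial estimate of the proof. With that substitute, your alteration step (including the correct observation that the per-pair exponent cannot beat a union bound over the full family, so one must take $\delta$ small and delete) goes through as written. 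Alternatively, one can do what the paper does and quote Lemma 2 of \cite{Weiss98} directly.
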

We use the lemma in conjunction with the following simple fact:
\begin{lem}
\label{lem:indep-blocks}Fix $T$ and let $A,B\subseteq\{0,1,\ldots,T\}$
satisfy the three conditions of the previous lemma. Fix $0\leq k\leq\frac{9T}{10}-n$,
and let $z',z''\in[0,1]^{\mathbb{N}}$ such that $a^{*}$ appears
in $z'$ at each index $i\in A$ and in $z''$ at each index $j\in B+k$.
Then for every pair $a,b$ of $n$-blocks from $Y$, there is an index
$u$ such that $a$ appears in $z'$ at $u$, and $b$ appears in
$z''$ at $u$.
\end{lem}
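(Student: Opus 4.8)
The plan is to exploit the single property that $a^{*}$ contains a copy of every $n$-block of $Y$, which lets me locate $a$ inside a copy of $a^{*}$ in $z'$ and $b$ inside a copy of $a^{*}$ in $z''$, and then to select the two copies so that these occurrences land on a common index. First I would fix, once and for all, an alignment $p_{a}$ at which $a$ occurs inside $a^{*}$ and an alignment $p_{b}$ at which $b$ occurs inside $a^{*}$; since $a^{*}$ has length $D$ and $a,b$ have length $n$, both $p_{a},p_{b}$ lie in $\{0,1,\ldots,D-n\}$. Because $a^{*}$ occurs in $z'$ at every $i\in A$, the block $a$ occurs in $z'$ at every index of the form $i+p_{a}$ with $i\in A$; likewise $b$ occurs in $z''$ at every index $j+p_{b}$ with $j\in B+k$.

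Next I would translate the requirement ``$a$ occurs in $z'$ at $u$ and $b$ occurs in $z''$ at $u$'' into a statement about the difference set of $A$ and $B$. Demanding $i+p_{a}=j+p_{b}=u$ for some $i\in A$ and $j\in B+k$ is, after writing $j=j_{0}+k$ with $j_{0}\in B$, exactly the condition that the integer $k'=k+(p_{b}-p_{a})$ lie in $A-B$, i.e. that $A\cap(B+k')\neq\emptyset$. Thus the lemma reduces to producing a single element of $A\cap(B+k')$, after which one sets $u=i+p_{a}$.

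This is precisely what condition (3) of Lemma \ref{lem:weiss-blocks} supplies, provided the shift $k'$ falls in the admissible range. Here I would observe that condition (3) is effectively symmetric: exchanging the roles of $A$ and $B$ shows that $A\cap(B+k')\neq\emptyset$ for every integer $k'$ with $|k'|\leq\frac{9T}{10}$, so it suffices to check $|k'|\le\frac{9T}{10}$. Since $0\le k\le\frac{9T}{10}-n$ and $|p_{b}-p_{a}|\le D-n$, we have $|k'|\le k+|p_{b}-p_{a}|$, and the whole argument comes down to verifying that this quantity stays below $\frac{9T}{10}$.

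The step I expect to be the main obstacle is exactly this control of the shift $k'$: the offset $p_{b}-p_{a}$ arising from the internal positions of $a$ and $b$ in $a^{*}$ can be as large as $D-n$, so one must be sure the cutoff $\frac{9T}{10}$ in condition (3) has enough room to absorb it. This is where the slack deliberately built into the hypotheses is meant to be used — the $\frac{T}{10}$ margin between $\frac{9T}{10}$ and $T$ in Lemma \ref{lem:weiss-blocks}, the subtraction of $n$ in the range $k\le\frac{9T}{10}-n$, and the fact that the fixed length $D$ is negligible against the large parameter $T$. Once the bound $|k'|\le\frac{9T}{10}$ is secured, condition (3) (with the symmetry above) yields $i\in A$ and $j_{0}\in B$ with $i-j_{0}=k'$; setting $u=i+p_{a}$ then gives an index at which $a$ appears in $z'$ and $b$ appears in $z''$, which completes the proof.
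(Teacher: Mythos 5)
The paper itself offers no proof of this lemma (it is introduced as a ``simple fact'' and never proved), so there is no written argument to compare against; but your reduction is clearly the intended one, and most of it is correct: fix occurrences of $a$ and $b$ inside $a^{*}$ at offsets $p_{a},p_{b}$, observe that a common index exists precisely when $A\cap(B+k')\neq\emptyset$ for $k'=k+(p_{b}-p_{a})$, and note that condition (3) of Lemma \ref{lem:weiss-blocks}, applied to both ordered pairs $(A,B)$ and $(B,A)$, yields $A\cap(B+k')\neq\emptyset$ for all integers $|k'|\leq\frac{9T}{10}$.

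However, the step you yourself single out as the main obstacle is a genuine gap, and none of the three sources of ``slack'' you invoke closes it. Your bound is $|k'|\leq k+|p_{b}-p_{a}|\leq\left(\frac{9T}{10}-n\right)+(D-n)=\frac{9T}{10}+D-2n$, and $D>2n$ is not an edge case but the typical situation: $a^{*}$ has length $D$ and must contain every $n$-block of $Y$, an irreducible shift of finite type with positive entropy, so $D$ is at least the number of $n$-blocks of $Y$, which grows exponentially in $n$. Thus for $k$ near $\frac{9T}{10}-n$ and $p_{b}-p_{a}$ near $D-n$ the required shift exceeds $\frac{9T}{10}$, where condition (3) asserts nothing, and nothing else in the hypotheses prevents the difference set $A-B$ from missing such a value. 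Examining your three proposed rescues: the $-n$ in the range of $k$ absorbs an offset of size $n$, not one of size $D-n$; the ``margin'' between $\frac{9T}{10}$ and $T$ is not part of what condition (3) guarantees, so it cannot be spent; and $T$ is \emph{fixed} in the statement, so the fact that $D$ is negligible against $T$ does not make $\frac{9T}{10}+D-2n\leq\frac{9T}{10}$. The correct repair is to run your argument under the hypothesis $0\leq k\leq\frac{9T}{10}-D$ (equivalently, to demand condition (3) for shifts up to $\frac{9T}{10}+D$); this is harmless in the paper's application, where $D$ is fixed before $H$, and hence before $T=\frac{\varepsilon\rho}{10}H$, is chosen enormous, and with that adjustment your proof is complete. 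As written, though, the final inequality you need is simply false under the stated hypotheses.
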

Let $\rho,\delta,T_{0}$ be as in the preceding lemmas. Since $(X,T,\mu)$
has zero topological entropy, it follows that we can choose an integer
$H\geq\frac{10}{\rho\varepsilon}T_{0}$ and large enough so that $2^{\delta(\varepsilon\rho/10)H}$
is greater than the number of $(P_{n},H)$-names in $X$. We fix such
an integer $H$ and construct an Alpern tower \cite{EigenPrasad1997}
over some clopen set $B\subseteq X$, with columns of heights $H$
and $H+1$. This means that every point in $B$ returns to $B$ for
the first time after either $H$ or $H+1$ applications of $T$. The
$i$-th level of the tower is the set of points $T^{i}B\setminus B$,
and the disjoint union of these levels for $0\leq i\leq H+1$ is all
of $X$. The last property can be obtained because $(X,T)$ is minimal.
This is a standard modification of the construction of Alpern towers:
one begins the construction with a clopen set, and notes that, due
of minimality, all points eventually return to it. 

Purify the columns according to $P_{n}$, and let $B_{1}\ldots B_{N}$
be the bases of the purified columns. Thus, $\{B_{1}\ldots,B_{N}\}$
is a clopen partition of $B$ which refines the partition according
to return time, and, if $h(i)$ denotes the height of the column over
$B_{i}$, then all $x\in B_{i}$ have the same $P_{n}$-itinerary
up to time $h(i)$, and these itineraries are distinct for different
$i$. Note that the $P_{n}$-name of each column appears in $Y$.

Divide each column into $\frac{10}{\varepsilon\rho}$ blocks of length
$\frac{\varepsilon\rho}{10}H$ (which we assume for convenience is
an integer), and possibly an additional level in those columns which
are of height $H+1$. We proceed to modify $P_{n}$ as follows.
\begin{itemize}
\item In each column, re-name the bottom $1+\frac{1}{\rho}$ blocks so that
they are identical, and similarly for the top $1+\frac{1}{\rho}$
blocks; and do so in such a way that the name of the entire column
is admissible for $Y$. This can be done because $\frac{\rho\varepsilon}{10}H$,
the length of each block, is much larger than $D$. Notice that by
choice of $\rho$, the first and last $n+1$ blocks in each column
are identical.
\item To each block, except the top and bottom $n$ blocks of each column,
assign a distinct set $A\subseteq\{0,\ldots,\frac{\varepsilon\rho}{10}H-1\}$
such that $|u-v|\geq\frac{10D}{\varepsilon}$ for distinct $u,v\in A$,
and if $A,B$ are assigned to distinct blocks and $\frac{1}{10}\cdot\frac{\varepsilon\rho}{10}H\leq k\leq\frac{9}{10}\cdot\frac{\varepsilon\rho}{10}H$
then $A\cap(B+k)\neq\emptyset$. We can do this by the choice of $H$
and the lemma. To the bottom $n$ blocks in each column assign the
same set $A$ which is assigned to the $n+1$-st block of that column,
and similarly to the top $n$ blocks assign the same set which is
assigned to the $n+1$-th block from the top. We have thus assigned
a set to each block.
\item For a block $b$ appearing in one of the columns and the set $A$
associated to it, we modify $b$ as follows. For convenience, in this
paragraph we renumber the coordinates of $b$ from $0$ to $\frac{10}{\varepsilon\rho}-1$,
no matter where in the column $b$ actually appears. For each $i\in A$
we replace the block of length $D$ in $b$ starting at $i$ with
the block $a^{*}$. Next, modify the symbols from $i-D$ to $i-1$
and from $i+D$ to $i+2D-1$ in such a way that the entire block from
$i-2D$ to $i+3D$ appears in $Y$; we can do this by the definition
of $D$. All in all, we have changed $b$ from index $i-D$ to index
$i+2D-1$. Because of the distance between successive elements of
$A$, these changes for different $i\in A$ occur at different places
in $b$ and the changes do not interfere with each other.

Note that the bottom $n+1$ blocks of each column are still identical,
as are the $n+1$ top blocks.

Denote by $\tilde{P}_{n+1}$ the partition obtained so far, and by
$\tilde{f}_{n+1}$ the corresponding function.

\item If $b_{1},b_{2},\ldots,b_{1/\rho}$ are the bottom $\frac{1}{\rho}$
blocks of some column, replace $b_{k}$ with $(k-1)\rho\cdot b_{k}$,
where $\alpha\cdot b_{i}$ is the block obtained by multiplying each
coordinate of $b_{i}$ by $\alpha$. Similarly, if $c_{1},c_{2},\ldots,c_{1/\rho}$
are the top $n$ blocks of a column replace $c_{k}$ with $(1/\rho-k)\rho c_{k}$.
\item For columns of height $H+1$, replace the top symbol with $0$.
\item Perturb the first symbol of each column by less than $\varepsilon$
in a way that the name of each column is unique.
\end{itemize}
Let $f_{n+1}$ be the functions defined by the revised partition;
we claim that it has the desired properties for some integer $r(n+1)$.

We first estimate the measure of points on which $f_{n}$ and $f_{n+1}$
differ. It suffices to show that in each column the fraction of levels
modified is less than $\varepsilon$. The change to the top and bottom
$\frac{1}{\rho}$ blocks amounts to $\frac{2}{\rho}$ blocks out of
$\frac{10}{\varepsilon\rho}$, which is $\frac{\varepsilon}{5}$ of
the levels. Consider now the intermediate levels. Since in the sets
$A$ associated to the blocks the distance between elements is at
least $\frac{10D}{\varepsilon}$, and each element causes a change
of $3D$ symbols to its block, here too we have caused a change to
at most a $\frac{3\varepsilon}{10}$-fraction of the levels. The change
to the top symbol of columns of height $H+1$ amounts to less than
$\frac{1}{H}$ of the space. Thus we have indeed modified $f_{n}$
on a set of measure less than $\varepsilon$. 

We now show that we can choose $r(n+1)$ so that $f_{n+1}$ is $(m,r(m))$-good
for each $m\leq n+1$. Note that every block in $f_{n+1}^{(\infty)}(X)$
of length $r(n)+n$ is of the form described in lemma \ref{lem:perturbed-blocks},
so for $m\leq n$ the conclusion follows immediately from that lemma.

We must show that $f_{n+1}$ is $(n+1,r)$-good for some $r$. Let
$x',x''\in X$. We must show that there is a $k$ of bounded size
such that $\left\Vert f_{n+1}^{(n+1)}(x')-f_{n+1}^{(n+1)}(\sigma^{k}x')\right\Vert _{\infty}<\frac{1}{n+1}$
and $\left\Vert f_{n+1}^{(n+1)}(x'')-f_{n+1}^{(n+1)}(\sigma^{k}x'')\right\Vert _{\infty}<\frac{1}{n+1}$.
Denote $y'=\widetilde{f}_{n+1}(x')$ and $y''=\widetilde{f}_{n+1}(x'')$,
and also $z'=f_{n+1}^{(n+1)}(x')$ and $z''=f_{n+1}^{(n+1)}(x'')$.
We distinguish several cases.

\textbf{Case 1}. Both $x',x''$ are in the top block or level $H+1$
of their respective columns. Then the first $\frac{10}{\rho\varepsilon}$
symbols of $f_{n+1}^{(\infty)}(x'),f_{n+1}^{(\infty)}(x'')$ are $0$,
and the conclusion holds for $k=1$.

\textbf{Case 2}. Exactly one of the points, say $x'$, is in the top
block or level $H+1$ of its column, so the first $\frac{10}{\rho\varepsilon}$
symbols of $f_{n+1}^{(\infty)}(x')$ are $0$. Note that in $y''=\widetilde{f}_{n+1}(x'')$
the block $a^{*}$ appears somewhere between index $1$ and $\frac{10}{\rho\varepsilon}$,
hence there is a $0<k\leq\frac{10}{\rho\varepsilon}$ with $\left\Vert \tilde{f}_{n+1}^{(n+1)}(x'')-\tilde{f}_{n+1}^{(n+1)}(\sigma^{k}x'')\right\Vert _{\infty}=0$.
If we replace $\tilde{f}_{n+1}^{(n+1)}$ with $f_{n+1}^{(n+1)}$ the
left hand side changes by at most $\rho$ and we get\[
\left\Vert f_{n+1}^{(n+1)}(x'')-f_{n+1}^{(n+1)}(\sigma^{k}x'')\right\Vert _{\infty}<\rho.\]
On the other hand, $\left\Vert f_{n+1}^{(n+1)}(x')-f_{n+1}^{(n+1)}(S^{k}x')\right\Vert _{\infty}=0$
because the first $\frac{10}{r\varepsilon}$ symbols of the itinerary
of $x'$ are $0$; as desired.

\textbf{Case 3}. $x',x''$ are in different columns or the same column
but at least $\frac{1}{9}\cdot\frac{10}{\varepsilon\rho}$ levels
apart, and neither is in the top block or top level. By looking at
the blocks to which $x',x''$ belong and to the next block, by lemma
\ref{lem:indep-blocks} we see that for every pair of $n+1$-blocks,
and in particular the one appearing at the start of the itineraries
of $x',x''$, there is a $k$ in the range we want such that these
blocks appear again in the $\tilde{f}_{n+1}$ itinerary of both $x'$
and $x''$ at index $k$. As in case 2, this gives the conclusion
for the $f_{n+1}$ itinerary because the change from $\tilde{f}_{n+1}$
to $\tilde{f}_{n+1}$ is {}``too slow'' to affect the inequality
very much.

\textbf{Case 4}. $x',x''$ belong to the same column and are within
$\frac{1}{9}\cdot\frac{10}{\varepsilon\rho}$ levels of each other.
If they are in one of the bottom $\frac{1}{\rho}$ levels then we
are done by the periodicity of these blocks (again, there is some
slow {}``drift'' which does not affect us). Otherwise, the initial
$n+1$-block of both itineraries belongs to $Y$. We claim that there
is an $M$ such that either for some $0<i<M$ the points $T^{i}x',T^{i}x''$
belong to different columns but not to the top or bottom $\frac{1}{\rho}$
blocks of those columns, or else there exists a $k<M$ as desired.
This suffices because in the former case we can argue as in case 3,
and deduce that as $k$ ranges over the $1,\ldots,M+\frac{10}{\rho\varepsilon}$,
every pair of $n+1$-blocks from $Y$ appears at index $k$ in the
$f_{n+1}$-itineraries of $x',x''$. This gives the conclusion we
want.

It remains to show that there is such an $M$. This follows from the
fact that $f_{n+1}^{(\infty)}(X)$ is a minimal symbolic system. Indeed,
suppose the contrary. Then for every $M$ there exist points $x'_{M},x''_{M}\in X$
such that whenever $1\leq i\leq M$ and $T^{i}x'_{M},T^{i}x''_{M}$
are in different columns it is because they are within $\frac{10}{\varepsilon\rho}$
of the top or bottom of a column, and also the initial $n+1$-blocks
of the itineraries of $x',x''$ do not appear again together before
time $M$. We may assume that $x'_{M}\rightarrow x'$ and $x''_{M}\rightarrow x''$.
Now $x',x''$ have these properties as well, for all $M$. Assuming
as we may that $x'$ is above $x''$ in the column they belong to,
it follows that the itinerary of $x'$ is a shift of the itinerary
of $x''$, so the pair $(f_{n+1}^{(n+1)}(x'),f_{n+1}^{(n+1)}(x''))\in f_{n+1}^{(n+1)}(X)$
is of the form $(y,T^{r}y)$ for some $r\leq\frac{1}{9}\cdot\frac{10}{\rho\varepsilon}$.
But since $f_{n+1}^{(\infty)}(X)$ is minimal this point must be recurrent,
a contradiction. This completes the proof of theorem \ref{thm:TP-realization}.

Notice that the construction has introduced a fixed point $000\ldots$
in the resulting subshift. We do not know if this can be avoided;
more specifically, we do not know if the subshift can be made to be
minimal.

\subsection{\label{sub:Zd-TP}\label{sub:continuous-partitions}Partitions derived
from continuous functions and predictable $\mathbb{Z}^{d}$ actions}

In this section we prove a purely measure-theoretic and topological
lemma which involves no dynamics. Let $X$ be a normal topological
space and $\mu$ a regular probability measure on the Borel $\sigma$-algebra
of $X$. The entropy and conditional entropy of finite and countable
partitions is defined as usual \cite{Walters82}. For finite or countable
measurable partitions $\mathcal{P}=(P_{1},P_{2},\ldots)$ and $\mathcal{Q}=(Q_{1},Q_{2},\ldots)$
of $X$ with finite entropy, the Rohlin metric is defined by \[
d(\mathcal{P},\mathcal{Q})=H(\mathcal{P}|\mathcal{Q})+H(\mathcal{Q}|\mathcal{P})\]
This metric has the property that if $\mathcal{P}=(P_{1},P_{2},\ldots)$
and we define $\mathcal{P}^{(n)}=(P_{1},\ldots,P_{n},\cup_{k=n+1}^{\infty}P_{k})$,
then $\mathcal{P}^{(n)}\rightarrow\mathcal{P}$ in $d$.

We say that a partition $\mathcal{P}$ is \emph{continuous} if there
is continuous function $f\in C(X)$ which is constant almost surely
on each atom of $P_{i}$. Equivalently, $\mathcal{P}$ agrees with
the partition of $X$ into level sets of some $f\in C(X)$, up to
measure zero.
\begin{prop}
\label{pro:continuous-partitions-are-dense}The continuous partitions
are dense with respect to the Rohlin metric in the space of finite-entropy
countable partitions.\end{prop}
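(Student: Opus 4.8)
The plan is to reduce the statement to a clean approximation problem and then solve it with a combination of regularity, Urysohn's lemma, and a measure-adapted Cantor function. First I would reduce to finite partitions: since the excerpt records that $\mathcal P^{(n)}\to\mathcal P$ in the Rohlin metric, it suffices to approximate an arbitrary finite partition $\mathcal P=(P_1,\dots,P_N)$ by a continuous partition. Next I would invoke the standard continuity of the Rohlin metric under small modifications: if $\mathcal Q=(Q_1,\dots,Q_N)$ has $\mu(P_i\triangle Q_i)$ small for each $i$, then $d(\mathcal P,\mathcal Q)$ is small. One subtlety to flag here is that my continuous approximant will not literally have $N$ atoms: it will carry a countable collection of additional atoms sitting inside a set of small measure, so I must also control the entropy these extra atoms contribute. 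This is handled by the elementary bound that a partition refined only on a set of measure $<\varepsilon$, by atoms whose masses I can prescribe, adds at most a quantity tending to $0$ with $\varepsilon$ to both terms of the Rohlin metric.

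For the geometric core I would use regularity and normality. By inner regularity choose pairwise disjoint closed sets $C_i\subseteq P_i$ with $\mu(P_i\setminus C_i)$ small; by normality choose pairwise disjoint open sets $U_i\supseteq C_i$, and by outer regularity shrink them so that the collars $\mu(U_i\setminus C_i)$ are small as well. Urysohn's lemma then produces a continuous $g\colon X\to[0,1]$ taking a distinct constant value $c_i$ on each $C_i$ (and a further fixed value $c_0$ off $\bigcup_i U_i$), so that the only place $g$ fails to equal one of these finitely many values is the \emph{transition set} $\bigcup_i(U_i\setminus C_i)$, which has small measure. At this point the naive partition into the sets $g^{-1}(c_i)$ is close to $\mathcal P$ in measure, but it is \emph{not} a continuous partition: on the transition set $g$ runs continuously through intermediate values, so $g_*\mu$ generally has a nontrivial continuous part and the level-set partition of $g$ is uncountable.

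The main obstacle, and the heart of the argument, is precisely this: I need a continuous function that is genuinely constant almost everywhere on each atom, i.e.\ whose pushforward of $\mu$ is \emph{purely atomic} --- the transition must happen across a $\mu$-null set, not merely a small one. (Null boundaries alone do not suffice, since an indicator with a null jump set still need not agree a.e.\ with any continuous function.) I would resolve this by post-composition with a measure-adapted Cantor function. Writing $\nu$ for the continuous part of $g_*\mu$, an atomless measure carried by the small transition set, I would invoke the standard fact that for any atomless finite Borel measure $\nu$ on $[0,1]$ and any finite set of points there is a continuous nondecreasing $\phi\colon[0,1]\to[0,1]$ that is injective on that finite set and locally constant $\nu$-almost everywhere, so that $\phi_*\nu$ is purely atomic. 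Taking $\phi$ injective on $\{c_0,c_1,\dots,c_N\}$, the function $f=\phi\circ g$ is continuous with $f_*\mu=\phi_*(g_*\mu)$ purely atomic; hence the partition $\mathcal Q$ of $X$ into level sets of $f$ is a \emph{countable, finite-entropy, continuous} partition. Its principal atoms satisfy $f^{-1}(\phi(c_i))\supseteq C_i$, so they approximate the $P_i$ in measure, while its remaining atoms lie in the transition set of small measure; choosing the mass distribution produced by $\phi$ so that these atoms carry little entropy, the two reduction steps give $d(\mathcal P,\mathcal Q)<\varepsilon$, completing the approximation.
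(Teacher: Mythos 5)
Your proof is correct, but it takes a genuinely different route from the paper's. The paper never isolates a single crude Urysohn function and then repairs it; instead it runs an infinite induction over the dyadic rationals $\mathbb{D}$, building inside $X$ a nested family of open sets $U_r$ (with $\overline{U_r}\subseteq U_s$ for $r<s$ and $\mu$-null boundaries) together with disjoint closed sets $C_r$ that fill each successive transition zone $U_{r''}\setminus\overline{U}_{r'}$ up to a $(1-\varepsilon_k)$-fraction of its mass, and then defines $f(x)=\inf(\{1\}\cup\{r\,:\,x\in U_r\})$; atomicity of $f_*\mu$ is achieved in the limit because the mass not captured by $\bigcup_r C_r$ shrinks to zero, and the Rohlin estimate comes from a telescoping bound on $\sum_k d(\mathcal{Q}_k,\mathcal{Q}_{k+1})$ controlled by the $\varepsilon_k$. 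You instead perform exactly one Urysohn step in $X$ and move the whole difficulty to the target interval, where it becomes a one-dimensional problem: atomize $g_*\mu$ by a monotone continuous map separating the finitely many values $c_i$. Your route buys modularity --- the interplay of normality, regularity and $\mu$ inside $X$ is used only once, and the staircase lemma is a clean reusable statement about atomless measures on $[0,1]$ --- while the paper's route buys self-containedness, with the entropy control built into the same induction that produces $f$.

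Two compressed points in your sketch need to be done carefully, though neither is a fatal gap. First, your ``standard fact'' is true but not off-the-shelf: prove it by placing in each interval $(c_i,c_{i+1})$ a Cantor set $K_i$ that is null for the \emph{full} pushforward $g_*\mu$ (possible because non-atoms of $g_*\mu$ are dense and admit closed intervals of arbitrarily small $g_*\mu$-mass), and letting $\phi$ be a sum of the associated devil's staircases. Second, ``atoms whose masses I can prescribe'' is an overstatement: the masses of the extra atoms of $\mathcal{Q}$ are the $g_*\mu$-masses of the complementary gaps of the $K_i$, together with whatever atoms $g_*\mu$ happens to have inside the transition zone, and you cannot choose these freely --- a priori the atomic part of $g_*\mu$ on the transition zone could even have infinite entropy if $\phi$ separated its atoms. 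Both hazards are handled at once by building the $K_i$ so that the stage-$n$ intervals of the Cantor construction carry $g_*\mu$-mass decaying geometrically in $n$: then the gaps and the stray atoms are grouped into countably many level sets whose masses decay fast enough that their total entropy contribution tends to $0$ with $\varepsilon$. With those two repairs your argument is complete.
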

\begin{proof}
The proof is a variation on Urisohn's lemma which states that given
two closed disjoint sets $C_{0},C_{1}$ in a normal space, there is
a continuous function $0\leq f\leq1$ such that $f^{-1}(0)=C_{0}$
and $f^{-1}(1)=C_{1}$. 

Let $\mathbb{D}\subseteq\mathbb{Q}\cap[0,1]$ denote the dyadic rationals.
Let $\mathcal{P}=(P_{0},P_{1})$ be a partition into two sets and
let $\varepsilon>0$. We construct a continuous function $f:X\rightarrow[0,1]$
with $\mu(\cup_{r\in\mathbb{D}}f^{-1}(r))=1$ such that the countable
partition $\mathcal{Q}=\{f^{-1}(r)\,:\, r\in\mathbb{D}\}$ satisfies
$d(\mathcal{P},\mathcal{Q})<\varepsilon$. The proof in case $\mathcal{P}$
has more than two atoms is similar; this is sufficient, because the
finite partitions are dense in the Rohlin metric.

We construct a family of open sets $\{U_{r}\}_{r\in\mathbb{D}}$ with
$\overline{U_{r}}\subseteq U_{s}$ for $r\leq s$ and with $\mu(\partial U_{r})=0$.
We will also define closed disjoint sets $(C_{r})_{r\in\mathbb{D}}$
such that $C_{s}\subseteq U_{t}\setminus U_{r}$ for all $r<s<t$,
and $\mu(\cup C_{r})=1$. We will then define $f$ by \[
f(x)=\inf(\{1\}\cup\{r\,:\, x\in U_{r}\})\]
This defines a continuous function with $f|_{C_{r}}=r$, and so $\{f^{-1}(x)\,:\, x\in[0,1]\}$
equals $\{C_{r}\}$ up to measure $0$. 

Fix a sequence $(\varepsilon_{k})$ to be determined later. For $i=0,1$
let $C_{i}$ be disjoint closed sets with null boundary and $\mu(C_{i}\Delta P_{i})<\varepsilon$.
Set $U_{0}=\emptyset$ and $U_{1}=[0,1]\setminus C_{1}$ 

Let $\mathbb{D}_{k}\subseteq\mathbb{D}$ be the set of reduced dyadic
rationals with denominator $2^{k}$. We proceed by induction on $k$,
defining at each step the sets $U_{r},C_{r}$ for $r\in\mathbb{D}_{k}$
under the assumption that they have been defined already for $r\in\cup_{j<k}\mathbb{D}_{j}$.
Write $\mathbb{E}_{k}=\cup_{j<k}\mathbb{D}_{j}=\{r_{1},\ldots,r_{n}\}$
with $r_{1}<\ldots<r_{n}$ and let $r\in\mathbb{D}_{k}$. Then there
are $r',r''\in\mathbb{E}_{k}$ with $r'<r<r''$ and $(r',r'')\cap\mathbb{E}_{k}=\emptyset$.
Let $V=U_{r''}\setminus\overline{U}_{r'}$ and choose $C_{r}\subseteq V$
with $\mu(C_{r})>(1-\varepsilon_{k})\mu(V)=(1-\varepsilon_{k})\mu(U_{r''}\setminus U_{r'})$.
Choose $U_{r}$ such that it contains $C_{r}\cup U_{r'}$, it has
$\mu(\partial U_{r})=0$ and $\overline{U}_{r}\subseteq U_{r''}$.

Write $\mathcal{Q}=\{C_{r}\}_{r\in\mathbb{D}}$. Set $\widetilde{C}_{k}=\cup_{i\geq k}\cup_{r\in\mathbb{D}_{i}}C_{r}$
and let $\mathcal{Q}_{k}=\{C_{r}\}_{r\in\mathbb{E}_{k}}\cup\{\widetilde{C_{k}}\}$
be the partition obtained by merging all the atoms $C_{r}$ in $\mathcal{Q}$
with $r\in\cup_{j\geq k}D_{j}$. Let $C_{k}^{*}=\cup_{r\in D_{k}}C_{r}$.
The sequence $(\varepsilon_{k})$ controls the convergence of the
sequence $(\mu(C_{k}^{*}))$ to $1$, and the latter can be made to
converge arbitrarily quickly. In particular we can guarantee that
$Q$ has finite entropy. Now $\mathcal{Q}_{k}\rightarrow\mathcal{Q}$
in the Rohlin metric, so \begin{eqnarray*}
d(\mathcal{P},\mathcal{Q}) & = & \lim_{k\rightarrow\infty}d(\mathcal{P},\mathcal{Q}_{k})\\
 & \leq & \lim_{k\rightarrow\infty}(d(\mathcal{P},\mathcal{Q}_{1})+\sum_{i=1}^{k-1}d(\mathcal{Q}_{i},\mathcal{Q}_{i+1}))\\
 & = & d(\mathcal{P},\mathcal{Q}_{1})+\sum_{i=1}^{\infty}d(\mathcal{Q}_{i},\mathcal{Q}_{i+1})\end{eqnarray*}
and the last line can be made arbitrarily small by prudent choice
of $(\varepsilon_{k})$, since $\mathcal{Q}_{i+1}$ refines $\mathcal{Q}_{i}$
by splitting $C_{k}^{*}$ into at most $2^{k}$ atoms whose relative
mass is determined by $\varepsilon_{k}$.
\end{proof}
We can now prove theorem \ref{thm:Zd-determinism}. Note that even
for $d=1$ this proof is more direct than that given in \cite{KaminskiSiemaszkoSzymanski03}.
\begin{proof}
(of theorem \ref{thm:Zd-determinism}). Let $\mathbb{Z}^{d}$ act
on $X$ and suppose that for every $f\in C(X)$ one has \[
f\in\left\langle 1,T^{u}f\,:\, u<0\right\rangle \]
where $<$ is the lexicographical order on $\mathbb{Z}^{d}$. This
implies that $f$ is measurable with respect to the $\sigma$-algebra
generated by $\{T^{u}f\,:\, u<0\}$, and in particular this shows
that for any $T$-invariant measure $\mu$ there is a dense (in the
Rohlin metric) set of partitions $\mathcal{Q}$ for which $h(\mathcal{Q})=0$,
namely those which come from continuous functions (proposition \ref{pro:continuous-partitions-are-dense}).
Since $h(\mu,\mathcal{P})$ is continuous in $\mathcal{P}$ under
the Rohlin metric we conclude that $h(\mu,\mathcal{P})=0$ for every
two-set partition and hence $h(\mu)=0$. By the variational principle,
$h_{\textrm{top}}(T)=0$.
\end{proof}

\section{\label{sec:Prediction-in-symbolic-systems}Prediction in symbolic
systems}

\subsection{Generalities about subshifts and prediction}

Let $\Sigma$ be a finite alphabet, $\sigma:\Sigma^{\mathbb{Z}}\rightarrow\Sigma^{\mathbb{Z}}$
the shift transformation. For $x\in\Sigma^{\mathbb{Z}}$ set $x^{-}=(\ldots,x_{-2},x_{-1})$,
and for a subshift $X\subseteq\Sigma^{\mathbb{Z}}$ let $X^{-}=\{x^{-}\,:\, x\in X\}$.
A finite or right-infinite word $a$ is an extension of $x^{-}\in X^{-}$
if $x^{-}a$ appears in $X$. Let $L(X)$ be the set of finite words
appearing in $X$ and $L_{m}(X)=L(X)\cap\Sigma^{m}$. 

The following fact is well-known:
\begin{lem}
\label{lem:no-perfect-prediction}A subshift $X$ is the union of
periodic orbits if and only if every $x^{-}\in X^{-}$ extends uniquely
to $x\in X$.\end{lem}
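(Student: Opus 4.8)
The plan is to prove both implications, with the bulk of the work in the ``only if'' direction. For the easy direction, suppose $X$ is a union of periodic orbits and let $x,y\in X$ satisfy $x^{-}=y^{-}$. Writing $p,q$ for the periods of $x,y$, the common past $x^{-}=y^{-}$ is a left-infinite sequence admitting both $p$ and $q$ as periods, and I would invoke the elementary fact (a one-sided Fine--Wilf argument via B\'{e}zout, using that there are infinitely many coordinates to the left) that such a sequence is periodic with period $d=\gcd(p,q)$. Extending this past to the right with period $d$ produces a point $z$ of period $d\mid p$, hence of period $p$, with $z^{-}=x^{-}$; since a point of period $p$ is determined by any window of $p$ consecutive coordinates and the past supplies such a window, we get $z=x$, and symmetrically $z=y$, so $x=y$. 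Thus the past determines the future and every $x^{-}$ extends uniquely. Note this argument needs only that each point is periodic, not that the union is finite.

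For the converse, assume every $x^{-}\in X^{-}$ has a unique extension. The key object is the \emph{next-symbol map} $\phi\colon X^{-}\to\Sigma$, where $\phi(x^{-})$ is the first symbol of the (unique) continuation. First I would record that $X^{-}$ is compact (a continuous image of $X$) and that $\phi$ is well defined by uniqueness. The crucial point is that $\phi$ is continuous: if $y_{k}^{-}\to x^{-}$ with $\phi(y_{k}^{-})=a$, then each $y_{k}^{-}a$ is realized by some point of $X$, and a compactness/limit argument (using that $X$ is closed) shows $x^{-}a$ is realized too, so $a$ is a legal next symbol for $x^{-}$ and hence, by uniqueness, equals $\phi(x^{-})$. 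Since $\Sigma$ is finite and discrete, each fiber $\phi^{-1}(a)$ is clopen in the product space $X^{-}$, hence determined by finitely many coordinates; taking a common bound gives an integer $N$ such that $\phi(x^{-})$ depends only on $x_{-N},\ldots,x_{-1}$. In other words there is a map $\Phi$ on the finite set $L_{N}(X)$ describing a deterministic ``shift of the length-$N$ window'', and because the finite-memory rule applies to the past of $\sigma^{n+N}x$ for every $n\in\mathbb{Z}$, the window sequence $W_{n}=x_{[n,n+N-1]}$ of any $x\in X$ satisfies $W_{n+1}=\Phi(W_{n})$ for \emph{all} $n\in\mathbb{Z}$.

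It then remains to convert this finite-memory determinism into periodicity. For a fixed $x\in X$, the sequence $(W_{n})_{n\in\mathbb{Z}}$ is a bi-infinite orbit of the map $\Phi$ on the finite set $L_{N}(X)$; since it extends infinitely to the left, $W_{0}$ admits an infinite backward chain under $\Phi$, which in a finite functional graph forces $W_{0}$, and hence every $W_{n}$, to lie on a cycle. Thus $(W_{n})$ is periodic, say of period $P$, which gives $x_{n}=x_{n+P}$ for all $n$ and so $x$ is periodic. As $x$ was arbitrary, $X$ is a union of periodic orbits (indeed with periods bounded by $|L_{N}(X)|$, so $X$ is finite). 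I expect the main obstacle to be the finite-memory step: establishing continuity of $\phi$ cleanly and extracting the uniform dependence on the last $N$ symbols, together with the observation that a bi-infinite orbit of a map on a finite set must be entirely cyclic. Once these are in hand the periodicity of every point follows immediately.
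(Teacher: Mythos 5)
Your proof is correct, and while it rests on the same pivot as the paper's---the equivalence of unique extension with \emph{finite-memory} determinism, i.e.\ the existence of an $n$ such that $x_{-n},\ldots,x_{-1}$ determines $x_{0}$ throughout $X$---it is organized in the opposite logical direction and contains strictly more content. The paper argues the contrapositive: if $X$ is not a union of periodic orbits, then (by the unproved ``simple fact'') no such $n$ exists, so for every $n$ some word $a_{n}\in L_{n}(X)$ has two distinct one-symbol continuations, and a compactness/diagonal extraction produces a single past $x^{-}$ with two infinite extensions. You argue directly: uniqueness makes the next-symbol map $\phi\colon X^{-}\to\Sigma$ well defined, compactness of $X$ makes it continuous, and the clopenness of its fibers in the compact zero-dimensional space $X^{-}$ yields a uniform memory bound $N$; these two compactness arguments are contrapositives of one another, so this step is the same idea in different clothing. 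Where you genuinely add something is the second step: the paper merely cites the fact that finite-memory determinism forces $X$ to be a finite union of periodic orbits, whereas you prove it, via the observation that the window sequence $W_{n}=x_{[n,n+N-1]}$ is a bi-infinite orbit of a map on the finite set $L_{N}(X)$ and such an orbit must lie entirely on a cycle. This buys a self-contained proof and the explicit quantitative conclusion that $X$ is finite with all periods bounded by $|L_{N}(X)|$. A further small gain: your easy direction (periods $p,q$ of $x,y$ forcing agreement, via Fine--Wilf or simply via the fact that a $p$-periodic point is determined by $p$ consecutive coordinates of the common past) covers arbitrary unions of periodic orbits, matching the lemma's literal wording, while the paper's ``clear'' direction is stated only for finite unions.
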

\begin{proof}
If $X$ is a finite union of periodic orbits the conclusion is clear. 

For the converse, we rely on the simple fact that, if there is some
$n$ such that $x_{-n},\ldots,x_{-1}$ determines $x_{0}$ for all
$x\in X$, then $X$ is the finite union of periodic orbits. Thus
if $X\subseteq\Sigma^{\mathbb{Z}}$ is not the union of periodic orbits,
then for every $n$ there is a word $a_{n}\in L_{n}(X)$ and distinct
symbols $u_{n},v_{n}\in\Sigma$ such that $a_{n}u_{n},a_{n}v_{n}\in L_{n+1}(X)$.
Therefore there are words $b_{n},c_{n}\in\Sigma^{\mathbb{N}^{+}}$
beginning with $u_{n},v_{n}$ respectively such that $a_{n}b_{n},a_{n}c_{n}$
appear in $X$. By compactness, we can choose a subsequence $n(k)$
such that $u=u_{n(k)}$ and $v=v_{n(k)}$ are constant, $a_{n(k)}\rightarrow x^{-}\in X^{-}$,
$b_{n(k)}\rightarrow b\in\Sigma^{\mathbb{N}^{+}}$ and $c_{n(k)}\rightarrow c\in\Sigma^{\mathbb{N}^{+}}$.
But then $a,b$ begin with the distinct symbols $u,v$ and $x^{-}a,x^{-}b\in X$,
so $x^{-}$ has at least two extensions in $X$.
\end{proof}
Thus, every infinite subshift, including zero-entropy ones, has at
least one past with multiple extensions. On the other hand, the following
observation was pointed out to us by B. Weiss. Note that it is is
a special case of the general fact that minimal systems are invertible
on a dense $G_{\delta}$. 
\begin{lem}
\label{lem:prediction-in-minimal-shifts}If $X$ is a minimal subshift
then for every $a\in L(X)$ and $k\in\mathbb{N}$ there is a word
$b\in L(X)$ such that $ba\in L(X)$, and every occurrence of $ba$
in $X$ is followed by a unique word $c\in\Sigma^{k}$.\end{lem}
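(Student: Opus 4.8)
The plan is to use the fact recalled just above, that a minimal system is invertible on a dense $G_{\delta}$: since $\sigma$ is a homeomorphism of the minimal subshift $X$, this provides a dense $G_{\delta}$ set $U\subseteq X$ consisting of points $x$ that are the \emph{unique} point of $X$ with past $x^{-}$. Write $n=\ell(a)$ and let $[a]=\{x\in X:x|_{[0;n-1]}=a\}$; this is a nonempty clopen set because $a\in L(X)$. As $U$ is dense it meets the open set $[a]$, so I would fix a point $x^{*}\in U\cap[a]$ and set $c=x^{*}|_{[n;n+k-1]}\in\Sigma^{k}$. The word $b$ will be a sufficiently long front segment of the past of $x^{*}$.

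The main step is to promote the infinite-past determinism at $x^{*}$ to a finite window. I claim there is a $p$ such that every $x\in X$ with $x|_{[-p;n-1]}=x^{*}|_{[-p;n-1]}$ satisfies $x|_{[n;n+k-1]}=c$. If not, then for each $p$ there is a point $x_{p}\in X$ agreeing with $x^{*}$ on $[-p;n-1]$ but with $x_{p}|_{[n;n+k-1]}\neq c$; passing to a convergent subsequence $x_{p}\to\bar{x}$, the agreement windows exhaust $(-\infty;n-1]$, so $\bar{x}^{-}=(x^{*})^{-}$, whence $\bar{x}=x^{*}$ by the uniqueness property of $x^{*}\in U$. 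But $\{y:y|_{[n;n+k-1]}\neq c\}$ is clopen and contains every $x_{p}$, hence contains the limit $\bar{x}=x^{*}$, contradicting $x^{*}|_{[n;n+k-1]}=c$. Fixing such a $p$ and putting $b=x^{*}|_{[-p;-1]}$, the word $ba=x^{*}|_{[-p;n-1]}$ lies in $L(X)$, and by construction every occurrence of $ba$ in $X$, that is, every point reading $ba$ on $[-p;n-1]$ after a shift, is followed by the single block $c\in\Sigma^{k}$. This is exactly the assertion.

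The only substantive point is this passage from a single past-determined point to a finite predicting context, and it rests entirely on $x^{*}$ being the unique extension of its past: without uniqueness the limit $\bar{x}$ need not coincide with $x^{*}$ and the continuation could genuinely branch, which is why minimality is indispensable, both to produce the dense $G_{\delta}$ and to place a good point inside the prescribed cylinder $[a]$. In the opposite direction there is nothing to prove: lengthening the left context $b$ can only shrink the set $\{c\in\Sigma^{k}:bac\in L(X)\}$ of admissible $k$-futures, so once it is a singleton for some $b$ it stays one, and every occurrence of that $ba$ forces the same $c$.
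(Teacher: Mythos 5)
Your strategy is genuinely different from the paper's. The paper reduces to $k=1$ and argues combinatorially: fixing $u$ with $au\in L(X)$, it considers words $b$ with $bu\in L(X)$ such that $au$ occurs in $bu$ exactly twice, as a front segment and as a back segment; minimality (syndeticity of the occurrences of $au$) bounds the length of such words, and a maximal one is shown to be always followed by $u$. You instead follow the paper's parenthetical remark that the lemma is ``a special case of the general fact that minimal systems are invertible on a dense $G_{\delta}$'', and your second paragraph --- the compactness argument promoting the past-determinacy of the single point $x^{*}$ to a finite predicting window $[-p;n-1]$ --- is correct, and is exactly the right way to finish once such a point is in hand.

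The genuine gap is in how you produce $x^{*}$. The general fact you cite concerns unique preimages under the acting map of a minimal system, and on the two-sided subshift $X$ the shift is already a homeomorphism; applied to $(X,\sigma)$ the fact is vacuous (every point has exactly one $\sigma$-preimage) and says nothing about pasts determining futures. So the clause ``since $\sigma$ is a homeomorphism of the minimal subshift $X$, this provides a dense $G_{\delta}$ set $U$\ldots'' is a non sequitur. To use the fact one must apply it to the system in which prediction appears as non-invertibility, namely the one-sided shift on the space of pasts (equivalently, the reversed subshift), whose point preimages are precisely the one-symbol forward extensions of a past. Even granting that, one gets only a dense $G_{\delta}$ set of pasts with a unique \emph{one-symbol} extension, living in the space of pasts; to reach your $U$ one must still (i) upgrade to uniqueness of extensions of every length, i.e. establish the fact for all powers of the map, which is not automatic since powers of minimal maps need not be minimal, and (ii) transfer the resulting residual set from the space of pasts up to $X$, which is also not automatic since preimages of dense sets under factor maps need not be dense (this can be repaired by semi-openness of factor maps between minimal systems, or by hand using minimality and the forward invariance of the set of past-determined points). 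None of this appears in your proposal, and it is the real content of the step: indeed, the most direct rigorous proof that your $U$ is a dense $G_{\delta}$ is to deduce it from the very lemma being proved, so as written your argument is dangerously close to circular. This is presumably why the paper, after making its remark, still supplies the self-contained combinatorial proof.
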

\begin{proof}
It suffices to show this for $k=1$, as the general case then follows
by induction. Let $a\in L(X)$ and $u\in\Sigma$ such that $au\in L(X)$.
Consider all $b$'s such that $bu\in L(X)$ and $au$ appears in $bu$
exactly twice, as a front segment and a back segment. By minimality
the lengths of such $b$'s is bounded above and we can choose a maximal
such $b$. If $x^{+}\in X^{+}$ and $bx^{+}\in X^{+}$, then by minimality
$au$ appears in $x^{+}$; thus by maximality of $b$ we must have
$x^{+}(1)=u$, for otherwise there is a front segment $c$ of $x^{+}$
such that $au$ appears in $bc$ only as a front and back segment,
which is impossible by maximality of $b$. Thus $b$ is always followed
by $u$ in $X$.\end{proof}
\begin{cor}
\label{cor:prediction-word}If $X$ is a minimal subshift and $u\in L(X)$
then there is a word $v\in L(X)$ such that every occurrence of $v$
is followed by $u$.\end{cor}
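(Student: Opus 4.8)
The plan is to derive this from Lemma \ref{lem:prediction-in-minimal-shifts} by predicting a future long enough that it is forced to contain a copy of $u$, and then truncating the predicting word so that this copy of $u$ sits immediately after it.

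First I would invoke the uniform recurrence of minimal subshifts: since $X$ is minimal and $u\in L(X)$, there is an integer $R\geq\ell(u)$ such that every word in $L_{R}(X)$ contains an occurrence of $u$. Next, fixing any symbol $a\in L_{1}(X)$, I would apply Lemma \ref{lem:prediction-in-minimal-shifts} to this $a$ with $k=R$, obtaining a word $w\in L(X)$ (the word called $ba$ there) such that every occurrence of $w$ in $X$ is followed by one and the same word $c\in\Sigma^{R}$. In particular $wc\in L(X)$, so $c\in L_{R}(X)$, and hence $c$ contains a copy of $u$; write $c=p\,u\,q$ for the corresponding factorization.

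I would then set $v=wp$. Since $wp$ is a front segment of $wc=wpuq\in L(X)$, we have $v\in L(X)$. Moreover every occurrence of $v$ begins with an occurrence of $w$, which by construction is followed by $c=puq$; since $v=wp$ already accounts for the $w$ and the initial segment $p$, the symbols immediately following any occurrence of $v$ are exactly $u$ (and then $q$). Thus every occurrence of $v$ is followed by $u$, as required.

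The one point needing care---and the reason one cannot simply apply the lemma with $k=\ell(u)$---is that the forced continuation $c$ produced by Lemma \ref{lem:prediction-in-minimal-shifts} is dictated by the predicting word $w$ and need not equal $u$ on the nose. The device of choosing $k=R$ large enough to force some copy of $u$ inside $c$, and then truncating the predictor at the point just before that copy, is precisely what circumvents this obstacle; everything else is bookkeeping about front segments.
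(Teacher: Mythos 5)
Your proof is correct and follows essentially the same route as the paper: choose $k$ large enough (by minimality) that every $k$-block contains $u$, apply Lemma \ref{lem:prediction-in-minimal-shifts} to force a unique continuation $c=puq$, and truncate the predictor to $v=wp$. The only cosmetic difference is that you take $a$ to be a single symbol where the paper takes it to be the empty word; both work.
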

\begin{proof}
Let $u$ be given, let $k$ be large enough that every $c\in L_{k}(X)$
contains $u$. In the previous lemma let $a$ be the empty word, and
let $b,c$ be the words obtained. Then $b$ is always follows by $c$
and $c=c'uc''$ for some $c',c''$. The word $v=bc'$ has the desired
property.
\end{proof}

\subsection{Realization theorem}

We now begin the proof of theorem \ref{thm:symbolic-realization}.
We start with a measure preserving system $(X,\mathcal{B},\mu,T)$
of entropy zero, and wish to construct a strictly ergodic subshift,
supporting an isomorphic measure, in which each past has at most two
futures. We may assume $\mu$ is aperiodic (i.e. the set of periodic
points has measure 0); otherwise the statement is trivial. By e.g.
\cite{Weiss98}, we may assume that $\mu$ is an invariant measure
on a uniquely ergodic, topologically weak mixing, minimal subshift
$X\subseteq\{0,1\}^{\mathbb{Z}}$.

We construct a sequence of two-set generating clopen partitions $\mathcal{P}_{n}$
for $n=0,1,2,\ldots$ such that $\mathcal{P}_{n}\rightarrow\mathcal{P}_{*}$,
where $\mathcal{P}_{*}$ generates for $\mu$. Denote by $X_{n}$
the symbolic system arising from $X$ and $\mathcal{P}_{n}$. Note
that since $\mathcal{P}_{n}$ is clopen, $X_{n}$ is minimal and uniquely
ergodic. The two-sided $\mathcal{P}_{n}$-name of a point $x\in X$
is a point in $X_{n}$.

We will define a sequence of integers $m(n)\geq n$ such that $L_{m(n)}(X_{n})=L_{m(n)}(X_{n+1})$,
and another sequence $k(n)\geq n$ with the property that for every
$u\in\Sigma^{k(n)}$,\[
\#\{w\in\Sigma^{n}\,:\, uw\in L(X_{n})\}\leq2\]
these numbers will satisfy $m(n)\geq k(n)+n$, so that the system
$X_{*}$ arising from $\mathcal{P}_{*}$ will have the property that
for every $u\in\Sigma^{k(n)}$,\[
\#\{w\in\Sigma^{n}\,:\, uw\in L(X_{*})\}\leq2\]
This implies the desired result. By choosing the $m(n)$ large enough
at each stage, we can furthermore guarantee that $X_{*}$ is minimal
and uniquely ergodic, but for simplicity we do not go into the details
of this.

The construction is by induction. Define $\mathcal{P}^{(0)}$ to be
the clopen generating partition according to the $0$-th symbol, set
$m(0)=0$ and $k(0)=0$.

We describe now the inductive step of the construction. We are given
a two-set generating partition $\mathcal{P}_{n}$ of $X$ into clopen
sets and an integer $m(n)$. Given $\varepsilon>0$ we will construct
a new partition $\mathcal{P}_{n+1}$ which is $\varepsilon$-close
to $\mathcal{P}_{n}$. We will ensure that $L_{m(n)}(X_{n})=L_{m(n)}(X_{n+1})$
and define an integer $k(n+1)$ with the properties above. Finally
we will be free to choose $m(n+1)$ arbitrarily, since it only affects
the next step of the construction.

Let $Y_{n}$ be the shift of finite type whose allowed blocks of length
$m(n)+1$ are those appearing in $L_{m(n)+1}(X_{n})$. Since $X_{n}$
is infinite and transitive, and $X_{n}\subseteq Y_{n}$, it follows
from basic properties of shifts of finite type that $Y_{n}$ has positive
entropy. Using the fact that $X_{n}$ is mixing and has zero entropy
(whereas $Y_{n}$ has positive entropy) we can find a word $a\in L_{m(n)+1}(X_{n})$,
a word $b_{\textrm{old}}\in L(X_{n})$ and a word $b_{\textrm{new}}\in L(Y_{n})\setminus L(X_{n})$
such that $b_{\textrm{old}},b_{\textrm{new}}$ have the same length,
and both begin and end with the word $a$. Furthermore, using standard
marker arguments (see e.g. \cite{LindMarcus1995}), we may assume
that if $x\in X_{n}$ and we replace some sequence of occurrence of
$b_{old}$ in $x$ with $b_{new}$, and if these occurrences were
at least $2\ell(b_{old})$ apart, then we can identify the location
of the changes from the modified sequence.

The partition of $\mathcal{P}_{n+1}$ will be constructed by replacing
some of the occurrences of $b_{\textrm{old}}$ in $X_{n}$ with $b_{\textrm{new}}$.
This is done as follows. First, using Corollary \ref{cor:prediction-word},
choose $c\in L(X_{n})$ such that every time $c$ appears in $X_{n}$
it is followed by $b_{\mbox{old}}$. We can extend $c$ backwards
arbitrarily while preserving this property, so we may assume that
$c$ is arbitrarily long. Since $X_{n}$ is minimal, there is an $R$
such that the gap between occurrences of $c$ in $X_{n}$ is at most
$R$. 

Next, choose a large $N$ (how large will depend on $R,\ell(b_{\mbox{old}})$
and on the growth of words in the system $X_{n}$, and will be explained
below) and choose a clopen bounded Alpern tower in $X_{n}$ all of
whose columns are of height $N-1$ or $N$, and such that the base
is contained in the cylinder set defined by $cb_{old}$. Purify each
column of the tower according to the clopen partition $\vee_{i=0}^{4N}T^{-i}\mathcal{P}_{n}$.
Consider one such column, which corresponds to the $\mathcal{P}_{n}$-name
$w$. We proceed to modify the $\mathcal{P}_{n}$-name of the column;
doing this for each column defines a new partition $\mathcal{P}_{n+1}$.

Fix $x\in X$ and its corresponding column. Let $i(1)=0$ denote the
height in the column of the first occurrence of $cb_{\textrm{old}}$
in $w$, let $i(2)$ be the index of the next occurrence which does
not intersect the first occurrence, and so on until $i(r)$, the index
of the last occurrence of $cb_{old}$ which is contained completely
in the current column. Replace the occurrences of $cb_{\textrm{old}}$
at indices $i(1),i(2)$ with $cb_{\textrm{new}}$. 

Using the syndeticity of occurrences of $c$, for some $\alpha>0$
we have $r\geq\alpha N$, where $\alpha$ depends on $R$ but not
$N$. We next encode the $\mathcal{P}_{n}$-name of $x$ from time
$0$ to $4N$. We do so by replacing the word $cb_{\textrm{old}}$
at some of the levels $i(4),i(6),\ldots,i(r-2)$ with $cb_{\textrm{new}}$.
We use only locations $i(j)$ where $j$ is even; thus no new consecutive
occurrences of $cb_{new}$ are introduced, and the consecutive occurrences
of $cb_{\textrm{new}}$ at the bottom of the column are unique and
serve to identify it. We can encode the atom of $\vee_{i=0}^{4N}T^{-i}\mathcal{P}_{n}$
to which $x$ belongs in the approximately $\frac{1}{2}\alpha N$
bits available because $h(X_{n})=0$, so the number of $\vee_{i=0}^{4N}T^{-i}\mathcal{P}_{n}$-names
is $<2^{\alpha N/4}$ assuming $N$ is large enough.

We have defined a partitions $\mathcal{P}_{n+1}$. Note that we have
modified $w$ along a set of density at most $\ell(b_{\textrm{old}})/\ell(cb_{\textrm{old}})$,
which can be made arbitrarily small by making $c$ long; thus $\mathcal{P}_{n+1}$
can be made $\varepsilon$-close to $\mathcal{P}_{n}$. 

Since $b_{\textrm{new}}$ does not appear in $L(X_{n})$, we can recover
the $\mathcal{P}_{n}$ name of a point $x\in X$ simply by replacing
every occurrence of $cb_{\textrm{new}}$ with $cb_{\textrm{old}}$.
Thus, since $\mathcal{P}_{n}$ generates, so does $\mathcal{P}_{n+1}$.

Because $b_{\textrm{old}},b_{\textrm{new}}$ agree on their first
and last $m(n)$ symbols, and because $b_{\textrm{new}}\in Y_{n}$
and all $m(n)$-blocks in $Y_{n}$ are in $L_{n}(X_{n})$, we also
have $L_{m(n)}(X_{m})\subseteq L_{m(n)}(X_{n+1})$.

Consider a point $x\in X$ . We will show that by looking $2N$ symbols
into the past of the $\mathcal{P}_{n+1}$-name of $x$, we can determine
that the $\mathcal{P}_{n+1}$-name of $x$ from time $1$ to $\ell(b_{\textrm{new}})$
takes on one of at most two possible values. Thus setting $k(n)=2N$
and noting that $\ell(b_{\textrm{new}})\geq m(n)\geq n$ we will have
completed the inductive step. 

Look into the $\mathcal{P}_{n+1}$-past of $x$ until we find a sequence
of two consecutive occurrences of $cb_{\textrm{new}}$; this must
happen after at most $N$ symbols at some index $i$. Looking back
at most $N$ symbols more we find the next group of two or five consecutive
$cb_{\textrm{new}}$'s at some index $j$. Between $j$ and $i$ we
have coded the $\mathcal{P}_{n}$ name of $x$ from times $j$ to
time $j+3N$ (and even a little bit more). In any case, assuming as
we may that $N>\ell(b_{\textrm{new}})$, and since $j\geq-2N$, we
can certainly recover the $\mathcal{P}_{n}$ name of $x$ from time
$j$ to time $\ell(b_{\textrm{new}})$.

We now claim that there are at most two choices for the $\mathcal{P}_{n+1}$-name
of $x$ from time $1$ to $m(n+1)$. Note that the $\mathcal{P}_{n}$-name
of $x$ and the $\mathcal{P}_{n+1}$-name of $x$ differ only at points
which lie in the $\ell(b_{\textrm{new}})$ symbols following certain
occurrences of $c$. But if some such occurrence of $c$ intersects
the $\mathcal{P}_{n}$-name of $x$ from times $-\ell(b_{\textrm{new}})+1$
to $\ell(b_{\textrm{new}})$, then from space considerations there
is a unique such $c$; and in this case the next $\ell(b_{\textrm{new}})$
symbols of $x$ are either $b_{\textrm{new}}$ or $b_{\textrm{old}}$.
Thus there are at most two possible choices for the atom of $\vee_{s=1}^{m(n)+1}T^{s}\mathcal{P}_{n+1}$
to which $x$ belongs. 

This completes the discussion of the induction step. By choosing $\varepsilon$
small enough at each stage we can arrange that $\mathcal{P}_{n}\rightarrow\mathcal{P}_{*}$
with $\mathcal{P}_{*}$ a generating partition for $\mu$, and $X_{*}$
will be $2$-branching. By a proper choice of $m(n)$ and using the
unique ergodicity and minimality of $X$ (and hence of all the $X_{n}$),
we can also ensure that $X_{*}$ is minimal and uniquely ergodic.

\section{\label{sec:extremely-non-invertible-examples}An extremely non-invertible
zero-entropy system}

\subsection{Generalities}

In this section we address the relation between entropy and the structure
of preimage sets of points in non-invertible topological systems.
The motivation for this is the following simple fact, whose proof
is a good illustration of why one expects there to be a connection
between entropy and large preimage sets:
\begin{prop}
\label{pro:large-preimages-implies-entropy}A system with no small
preimages has entropy at least $\log2$.\end{prop}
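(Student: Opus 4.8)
The plan is to bound the topological entropy from below using $(n,\varepsilon)$-separated sets, building an exponentially large separated set directly out of the preimage structure. Recall that if $s(n,\varepsilon)$ denotes the maximal cardinality of an $(n,\varepsilon)$-separated set, then $h(T)\geq\limsup_{n}\frac{1}{n}\log s(n,\varepsilon)$ for every fixed $\varepsilon>0$. So it suffices to produce, for each $n$, a set of $2^{n}$ points that is $(n,\delta)$-separated, where $\delta>0$ is the constant from the no-small-preimages hypothesis.

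First I would construct a full binary tree of preimages. Fix any point and call it the root. The hypothesis guarantees that the preimage of every point has diameter greater than $\delta$, so there exist two preimages at distance greater than $\delta$; using surjectivity of $T$ to ensure preimages are nonempty, I can choose, at each node, two children mapping to that node and lying at distance $>\delta$ from each other. Iterating, I obtain points $p_{s}$ indexed by binary words $s\in\{0,1\}^{k}$ for $0\leq k\leq n$, such that $T(p_{s0})=T(p_{s1})=p_{s}$ and $d(p_{s0},p_{s1})>\delta$ for every $s$. In particular $T^{j}(p_{s})=p_{s'}$, where $s'$ is the length-$(k-j)$ prefix of $s$.

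The key step is to check that the $2^{n}$ leaves $\{p_{s}:s\in\{0,1\}^{n}\}$ form an $(n,\delta)$-separated set. Given two distinct leaves $p_{s},p_{t}$, let $j$ be the first coordinate at which $s$ and $t$ differ. Applying $T^{n-j}$ sends $p_{s}$ and $p_{t}$ to their respective length-$j$ prefixes, which share their first $j-1$ symbols and differ in the $j$-th; these are precisely two sibling children of a common node, hence at distance greater than $\delta$ by construction. Since $0\leq n-j\leq n-1$, this exhibits a time in $\{0,\ldots,n-1\}$ at which the orbits of $p_{s}$ and $p_{t}$ are $\delta$-separated. Therefore $s(n,\delta)\geq 2^{n}$, and letting $n\to\infty$ gives $h(T)\geq\log 2$.

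I expect the only real obstacle to be the bookkeeping in the separation step, namely correctly matching the branching level $j$ of the tree to the iterate $T^{n-j}$ that realizes the separation and confirming this iterate lands in the admissible range $\{0,\ldots,n-1\}$. The extraction of two genuine points at distance $>\delta$ from the diameter condition, and the appeal to surjectivity for nonemptiness of preimages, are routine.
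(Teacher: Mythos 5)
Your proof is correct and is essentially the paper's own argument: the paper builds the same binary tree of iterated preimages via selection maps $\tau_{0},\tau_{1}$ with $d(\tau_{0}(x),\tau_{1}(x))>\delta$, indexes the $2^{n}$ points by binary words, and separates any two of them at the first branching index exactly as you do. Your prefix-tree bookkeeping (applying $T^{n-j}$ to land on two siblings at distance $>\delta$, with $0\leq n-j\leq n-1$) matches the paper's computation with $T^{n-i-1}$, so there is nothing to fix.
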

\begin{proof}
Let $(X,T)$ be a system and $\delta>0$ such that for every $x\in X$
there are $x',x''\in T^{-1}(x)$ with $d(x',x'')>\delta$. We can
define functions $\tau_{0},\tau_{1}:X\rightarrow X$ such that $\tau_{0}(x),\tau_{1}(x)\in T^{-1}(x)$
and $d(\tau_{0}(x),\tau_{1}(x))>\delta$; note that $\tau_{0},\tau_{1}$
need 7 not be continuous. For $n\in\mathbb{N}$ and a sequence $a=a_{n}a_{n-1}\ldots a_{1}\in\{0,1\}^{n}$
let \[
\tau_{a}(x)=\tau_{a_{n}}(\tau_{a_{n-1}}(\ldots\tau_{a_{1}}(x)\ldots))\]
Note that $T(\tau_{a}(x))=\tau_{b}(x)$ where $b\in\{0,1\}^{n-1}$
is obtained by deleting the first symbol of $a$. 

For a fixed $x\in X$ consider the set \[
A_{n}(x)=\{\tau_{a}(x)\,:\, a\in\{0,1\}^{n}\}\]
If $a,b\in\{0,1\}^{n}$ and $a\neq b$ then there is a maximal index
$i<n$ such that $a_{j}=b_{j}$ for $1\leq j\leq i$ but $a_{i+1}\neq b_{i+1}$.
Let $y=\tau_{a_{i}a_{i-1}\ldots a_{1}}(x)=\tau_{b_{i}b_{i-1}\ldots b_{1}}(x)$;
then\begin{eqnarray*}
T^{n-i-1}(\tau_{a}(x)) & = & \tau_{a_{i+1}}(y)\\
T^{n-i-1}(\tau_{b}(x)) & = & \tau_{b_{i+1}}(y)\end{eqnarray*}
so $d(T^{n-i+1}\tau_{a}(x)),T^{n-i+1}\tau_{b}(x))>\delta$. It follows
that all the points in $A_{n}(x)$ are distinct and the set $A_{n}(x)$
is $(n,\delta)$-separated; since this is true for all $n$, this
implies that $h(X,T)>\log2$.
\end{proof}
One easy consequence of this is that for finite alphabets $\Sigma$
every extremely non-invertible subshift of $\Sigma^{\mathbb{Z}}$
has entropy at least $2$, because once a metric is fixed there is
a $\delta$ such that every two distinct preimages of a point are
$\delta$ apart. 

As was mentioned in the introduction, J. Bobok has shown that for
maps of the interval if a map is $k$-to-one then it has entropy $>\log k$
\cite{Bobok02}. 

It is not hard to construct examples of zero entropy systems where
every point has multiple preimages, but it is not so easy to construct
such a system with a globally supported ergodic measure, and Eli Glasner
has asked whether this is possible. The construction below gives an
affirmative answer to this question.

\subsection{\label{sub:non-inv--construction}The construction}

Let $\sigma$ be the shift on the one-sided Bebutov system $[0,1]^{\mathbb{N}}$.
We will construct a subshift of the Bebutov system by specifying a
point $x_{*}\in[0,1]^{\mathbb{N}}$ and taking its orbit closure $X=\overline{\{\sigma^{n}x_{*}\}_{n\in\mathbb{N}}}$.
Things will be engineered so that $X$ has zero topological entropy,
and $x_{*}$ is generic for an ergodic measure $\mu$ on $X$ having
support $X$.

For words $x,y\in[0,1]^{\mathbb{N}}$ we set \[
d(x,y)=\sum_{i=1}^{\infty}|x(i)-y(i)|\cdot2^{-i}\]
this defines a metric on $[0,1]^{\mathbb{N}}$ which is compatible
with the compact product topology. We also write \[
\left\Vert x\right\Vert =d(x,\overline{0})\]
where $\overline{0}=(0,0,\ldots)$. For a finite word $x$ we define
\[
\left\Vert x\right\Vert =\sum_{i=1}^{\ell(x)}|x(i)|\cdot2^{-i}=\inf\left\{ \left\Vert y\right\Vert \,:\, y\in[0,1]^{\mathbb{N}}\textrm{ and }x\textrm{ is a front segment of }y\right\} \]
 Note that $\left\Vert ab\right\Vert \geq\left\Vert a\right\Vert $
and that if $x_{n}$ are finite words and $x_{n}\rightarrow x\in[0,1]^{\mathbb{N}}$
in the obvious sense then $\left\Vert x_{n}\right\Vert \rightarrow\left\Vert x\right\Vert $.

Suppose $x\in[0,1]^{*}$ is a finite word. We define $\theta_{0}(x),\theta_{1}(x)\in[0,1]$
by\[
\theta_{0}(x)=\frac{1}{8}\left\Vert x\right\Vert \qquad,\qquad\theta_{1}(x)=\frac{1}{4}\left\Vert x\right\Vert \]
and we define $\tau_{0},\tau_{1}:[0,1]^{*}\rightarrow[0,1]^{*}$ by\[
\tau_{0}(x)=\theta_{0}(x)x\qquad,\qquad\tau_{1}(x)=\theta_{1}(x)x\]
i.e. the symbols $\theta_{i}(x)$ are appended to the beginning of
$x$. 

For a sequence $b=b_{M}b_{M-1}\ldots b_{1}\in\left\{ 0,1\right\} ^{M}$
define $\tau_{b}$ inductively by\[
\tau_{b_{M}\ldots b_{1}}(x)=\tau_{b_{M}}(\tau_{b_{M-1}\ldots b_{1}}(x))\]
and set $T_{\emptyset}(x)=x$. Note that if $b=b_{M}\ldots b_{1}$
then \[
\sigma^{i}(\tau_{b}(x))=\tau_{b_{M-i}\ldots b_{1}}(x)\]
and in particular $\sigma^{M}(\tau_{b}(x))=x$. One verifies that
$\left\Vert \tau_{b}(x)\right\Vert \rightarrow0$ exponentially as
the length of $b$ tends to $\infty$, uniformly in $b$ and $x$. 

We define $\tau_{b}$ on $[0,1]^{\mathbb{N}}$ by the same formula.
In the subshift we are about to construct the preimage set of a point
$x$ will contain at least $\tau_{0}(x),\tau_{1}(x)$. Since $\tau_{b}(x)\rightarrow\overline{0}$
as $\ell(b)\rightarrow\infty$ the preimage tree of each point will
be {}``narrow'', and not contribute to the entropy. Note however
that there will also be preimages which do not come from applications
of $\tau_{b}$.

We construct $x_{*}$ in recursively. At the $n$-th stage we will
be given a finite word $x_{n}$ of length $L_{n}$ and construct a
word $x_{n+1}$ of length $L_{n+1}$ such that $x_{n+1}=x_{n}x'_{n}$
for some word $x_{n}'$. We then take $x_{*}$ to be the limit of
this increasing sequence of finite words.

We begin with an arbitrary finite word $x_{0}$ of length $L_{0}>0$.
Our only assumption about $x_{0}$ is that it is strictly positive.

The passage from stage $n$ to $n+1$ is as follows. Given $x_{n}$
of length $L_{n}$, for $0\leq k<L_{n}$ let $w_{k}$ be the back
segment of $x_{n}$ starting at index $k$, that is,\[
w_{k}=x_{n}(k)x_{n}(k+1)\ldots x_{n}(L_{n})\]
so $\ell(w_{k})=L_{n}-k+1$. For $b\in\left\{ 0,1\right\} ^{3^{L_{n}}}$
set\[
w_{b,k}=\tau_{b}(w_{k})\]
Define $y_{n}$ to be some concatenation of the words $w_{b,k}$ as
$b$ varies over $\left\{ 0,1\right\} ^{3^{L_{n}}}$ and $0\leq k<L_{n}$
(the order is not important).

Now choose a large integer $M_{n}$ which we will specify later. For
now we note that $M_{n}$ may be chosen to depend not only on all
the previous stages but also on $y_{n}$. Define\[
\begin{array}{ccc}
x_{n+1} & = & \underbrace{(x_{n}x_{n}\ldots x_{n})}y_{n}\\
 &  & M_{n}\textrm{ times}\quad\end{array}\]

Set $x_{*}=\lim x_{n}$ and let $X$ be the orbit closure of $x_{*}$.
In the next few subsections we will show that $(X,\sigma)$ has the
advertised properties.

\subsection{$(X,\sigma)$ is extremely non-invertible}

The point $x_{*}$ has been constructed in such a way that if some
finite word $a$ appears in $x_{*}$ then it appears in at least two
different configurations, preceded by symbols $r,r'\in[0,1]$ such
that $|r-r'|\geq\frac{1}{16}\left\Vert a\right\Vert $. This is because
if $a$ is a subword of $x_{n}$ then $a$ is a front segment of some
back segment $b$ of $x_{n}$, and so $\tau_{0}(b)$ and $\tau_{1}(b)$
appear in $x_{n+1}$, and by definition the first symbol of $\tau_{0}(b)$
and $\tau_{1}(b)$ differ by $\frac{1}{16}\left\Vert b\right\Vert $,
and $\left\Vert b\right\Vert \geq\left\Vert a\right\Vert $.

Thus if $y$ is a limit point of $x_{*}$ and $y\neq0$, then $y$
is a limit point of finite subwords $a_{n}$ of $x_{*}$, and since
$\left\Vert y\right\Vert >c>0$ for some $c$ we have that $\left\Vert a_{n}\right\Vert >c$
for all large enough $n$. Therefore we can find symbols $r'_{n},r''_{n}\in[0,1]$
such that $|r'_{n}-r''_{n}|>\frac{1}{16}c$ and $r'_{n}a_{n},r''_{n}a_{n}$
appear in $x_{*}$. Passing to a subsequence we get that $r'_{n}a_{n}\rightarrow r'y$
and $r''_{n}a_{n}\rightarrow r''y$ for some $r',r''\in[0,1]$ with
$|r'-r''|\geq\frac{1}{16}c$, and so $r'y,r''y$ are distinct preimages
of $y$ in $X$.

It remains to check that $\overline{0}$ has two preimages (it is
clear from the construction that $\overline{0}\in X$, since $x_{*}$
has arbitrarily long sequences of small numbers, consisting of front
segments of the $w_{b,k}$). Since $\overline{0}$ is a fixed point
of $\sigma$, one preimage is $\overline{0}$ itself. To see that
there are other preimages, note that the words $x_{n}$ all end in
the same positive letter $\varepsilon$, the last letter of $x_{0}$,
and this is also the last letter of all the words $w_{b,k}$ we constructed
at each stage. On the other hand as $\ell(b)\rightarrow\infty$ the
front segments of $w_{b,k}$ approach $\overline{0}$, so there are
arbitrarily long sequences of arbitrarily small numbers in $x_{*}$,
each sequence preceded by an occurrence of $\varepsilon$. Thus $\varepsilon000\ldots$
is also a preimage of $0$ in $X$.

\subsection{$(X,\sigma)$ has zero topological entropy}

We verify this by estimating the number of $\varepsilon$-separated
orbits. For words $a,a'$ (either finite or infinite) we write \[
\left\Vert a-a'\right\Vert _{\infty}=\sup_{i}|a(i)-a'(i)|\]
Note that for $x,x'\in X$, \[
\left\Vert x|_{[1;n]}-x'|_{[1;n]}\right\Vert _{\infty}>\varepsilon\;\;\implies\;\;\max\{d(T^{i}x,T^{i}x')\,:\, i=1,\ldots,n\}>\varepsilon\]

Fix $\varepsilon>0$, and let $A_{n}$ be the set of all subwords
of $x_{*}$ of length $n$. Set \[
C_{\varepsilon}(n)=\max\left\{ |A|\,:\, A\subseteq A_{n}\,,\,\forall a,a'\in A\,\left\Vert a-a'\right\Vert _{\infty}>\varepsilon\right\} \]
The topological entropy of $(X,S)$ is \[
\lim_{\varepsilon\rightarrow0}\,\limsup_{n\rightarrow\infty}\frac{1}{n}\log C_{\varepsilon}(n)\]

For a finite or infinite word $a$ with symbols in $[0,1]$, let $[a]_{\varepsilon}$
denote the word $b$ of the same length such that \[
b(i)=[a(i)/\varepsilon]\cdot\varepsilon\]
(here $[r]$ denoted the integer part of $r$). Thus the coordinates
of $[a]_{\varepsilon}$ belong to the finite set $\{0,\varepsilon,2\varepsilon,\ldots,[\frac{1}{\varepsilon}]\varepsilon\}$.
Note that if $\left\Vert a-a'\right\Vert _{\infty}\geq\varepsilon$
then $\left\Vert [a]_{\varepsilon/2}-[a']_{\varepsilon/2}\right\Vert _{\infty}\geq\varepsilon/2$.
It is therefore sufficient to prove the following:
\begin{claim}
\label{claim:word-growth-rate}For every $\varepsilon>0$, the number
of length $n$ subwords of $[x_{*}]_{\varepsilon/2}$ which are at
least $\varepsilon/2$ apart in $\left\Vert \cdot\right\Vert _{\infty}$
grows sub-exponentially with $n$.
\end{claim}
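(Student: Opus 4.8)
The plan is to count length-$n$ subwords of $[x_*]_{\varepsilon/2}$ up to $\varepsilon/2$-separation by tracking where such a subword can sit within the recursive structure of $x_*$. The key observation is that $x_*$ is built from copies of $x_n$ and from the words $w_{b,k}=\tau_b(w_k)$, and that the prefixes $\tau_b(\cdot)$ decay: $\left\Vert \tau_b(w_k)\right\Vert \to 0$ exponentially in $\ell(b)$, uniformly. So after quantizing at scale $\varepsilon/2$, any sufficiently long $\tau_b$-prefix is quantized to a string of zeros. Concretely, I would first fix $\varepsilon$ and pick a threshold $p=p(\varepsilon)$ such that whenever $\ell(b)\geq p$ the front segment of $\tau_b(w_k)$ of the relevant length has all coordinates below $\varepsilon/2$, hence is killed by the quantization. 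This means that in $[x_*]_{\varepsilon/2}$, the only ``information'' contributed by the $w_{b,k}$ blocks comes from the trailing copy of $w_k$ together with at most the last $p$ symbols of the $\tau_b$-prefix; the long prefixes all look like blocks of $0$'s.

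Next I would decompose a generic length-$n$ window of $x_*$ according to which stage-$n$ structural pieces it overlaps. A window either (i) lies inside one of the $M_n$ repeated copies of $x_n$ (or straddles a boundary between two consecutive copies), or (ii) lies inside or straddles the $y_n$ portion, which is the concatenation of the $w_{b,k}$. In case (i) the window is, up to its position, essentially a subword of $x_n x_n$, so there are only $O(L_n)$ such windows regardless of $n$. The real work is case (ii): there I would argue that, after quantization, a window overlapping $y_n$ is determined by a bounded amount of data — roughly which $w_k$ (there are $L_n$ choices of $k$), the last $\leq p$ symbols of the quantized $\tau_b$-prefix that survive, and the alignment — so the number of distinct quantized windows arising from $y_n$ is polynomial in $L_n$ and in the number of surviving prefix configurations, which is at most $(1/\varepsilon)^{O(p)}$, a constant depending only on $\varepsilon$.

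Putting these together, I would show that the number $C_{\varepsilon/2}(n)$ of $\varepsilon/2$-separated quantized length-$n$ subwords is bounded by a polynomial in $L_n$ times a constant $C(\varepsilon)$ — crucially with no exponential dependence on $n$. Since $L_n$ grows (the word lengths are determined by the recursion and the choice of the $M_n$), the point is to check that $\log C_{\varepsilon/2}(n)$ grows only like $\log(\mathrm{poly}(L_n))$, which is negligible against $n$: indeed $n\leq L_n$ and any fixed window of length $n$ forces $L_m\leq n$ for the relevant stage $m$, so the polynomial bound is in terms of a quantity comparable to $n$ itself, giving $\frac{1}{n}\log C_{\varepsilon/2}(n)\to 0$. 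This yields subexponential growth and hence, via the quantization comparison already noted in the excerpt, the vanishing of the limit defining topological entropy.

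The main obstacle I anticipate is case (ii): making precise that a window straddling several of the $w_{b,k}$ blocks does not encode exponentially much information. One must control how many distinct blocks $w_{b,k}$ a single length-$n$ window can touch and argue that their quantized images are not all distinct — this is exactly where the uniform exponential decay $\left\Vert \tau_b(x)\right\Vert \to 0$ is essential, since it collapses the otherwise $2^{\ell(b)}$ many choices of $b$ down to the few that differ in their last $p$ coordinates after quantization. A careful bookkeeping of alignments (using the \emph{alignment} terminology from the Notation section) and of how the window intersects block boundaries will be needed; I would organize this by first handling windows short enough to lie in a single block, then windows spanning a boundary, and finally long windows, reducing each to the stage-$n$ data $(k,b\text{ truncated},\text{alignment})$ whose cardinality is polynomial in $L_n$ and independent of $n$ up to the constant $C(\varepsilon)$.
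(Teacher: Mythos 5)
Your core mechanism is the same as the paper's: quantizing at scale $\varepsilon/2$ annihilates the exponentially decaying $\tau_{b}$-prefixes, so each block $w_{b,k}=\tau_{b}(w_{k})$ collapses to a long run of zeros followed by boundedly many surviving prefix symbols and the word $w_{k}$; the paper packages exactly this as Lemma \ref{lemma:structure-of-x-star}. But your bookkeeping has two gaps. The lesser one: your dichotomy (i)/(ii) decomposes only the prefix $x_{n+1}=(x_{n})^{M_{n}}y_{n}$ of $x_{*}$, whereas a length-$n$ window can sit arbitrarily far out, e.g.\ inside the decaying prefixes of $y_{s}$ for $s>n$, whose blocks $\tau_{b}(w_{k})$ end in suffixes $w_{k}$ of $x_{s}$ that recursively contain stage-$n$ material; covering all of $x_{*}$ requires an induction over stages, which is precisely what Lemma \ref{lemma:structure-of-x-star} supplies and what its (omitted) proof consists of.

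The fatal one is the claimed bound $C_{\varepsilon/2}(n)\leq C(\varepsilon)\cdot\mathrm{poly}(L_{m})$ with $L_{m}\leq n$: it is false, and it fails exactly at the obstacle you flagged. If $L_{m}\leq n<L_{m+1}$, note that $L_{m+1}\geq\ell(y_{m})\geq2^{3^{L_{m}}}$, so $n$ may be far larger than $3^{L_{m}}$, and a window then spans about $n/3^{L_{m}}$ consecutive blocks of $y_{m}$. The decay does collapse each block, but every block the window meets still carries independent surviving data: for instance the symbol immediately preceding $w_{k}$ equals $\frac{1}{8}\left\Vert w_{k}\right\Vert$ or $\frac{1}{4}\left\Vert w_{k}\right\Vert$ according to the last bit of $b$, and this survives quantization whenever $\left\Vert w_{k}\right\Vert \geq4\varepsilon$ (true, e.g., for $k\leq L_{0}$ and $\varepsilon$ small); moreover all $2^{3^{L_{m}}}$ values of $b$ occur in $y_{m}$, in an order the construction leaves arbitrary. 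Hence for $n\approx(3^{L_{m}})^{2}$ the number of distinct quantized windows can be of order $2^{n/3^{L_{m}}}=2^{\sqrt{n}}$, which is superpolynomial; and no other choice of stage helps, since every stage $m'\geq m+1$ has $L_{m'}\geq2^{3^{L_{m}}}$, also superpolynomial in such $n$. The claim survives because counts like $2^{\sqrt{n}}$ are still subexponential, but to prove it you must argue with exponential growth rates rather than cardinalities: for each \emph{fixed} $m$, a window meets at most $K=n/3^{L_{m}}+2$ type-2 blocks, each contributing at most $2L_{m}\log(2+2/\varepsilon)+\log L_{m}$ bits of content and phase, while the placement of block boundaries inside the window costs at most $\log\binom{n+K}{K}$ bits; dividing by $n$ gives $\limsup_{n}\frac{1}{n}\log C_{\varepsilon/2}(n)\leq\delta_{m}$ with $\delta_{m}\rightarrow0$ as $m\rightarrow\infty$, and since $m$ is arbitrary the limsup is $0$. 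This two-step limit (rate bound for each stage, then stage to infinity) is the ``standard counting argument'' the paper invokes after Lemma \ref{lemma:structure-of-x-star}; tying the stage to the window length in the hope of a polynomial count cannot be made to work.
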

We will use the following property of $x_{*}$:
\begin{lem}
\label{lemma:structure-of-x-star}For every $n$ we can write $x_{*}=a_{1}a_{2}a_{3}\ldots$
, where each $a_{i}$ is of length at least $3^{L_{n}}$ and for each
$i$, either
\begin{enumerate}
\item $a_{i}=x_{n}$, or
\item For each $1\leq j\leq\ell(a_{i})-L_{n}$ we have $a_{i}(j)\leq\frac{7}{8}a_{i}(j+1)$.
\end{enumerate}
In particular, for any $\varepsilon>0$, for $n$ large enough each
$a_{i}$ is either equal to $x_{n}$ or else all the coordinates of
$a_{i}$, except the last $2L_{n}$ coordinates, are of magnitude
$<\varepsilon$.
\end{lem}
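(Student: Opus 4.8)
The plan is to prove Lemma \ref{lemma:structure-of-x-star} by unwinding the recursive definition of $x_*$ and exhibiting the decomposition $x_*=a_1a_2a_3\ldots$ explicitly at level $n$. Recall that $x_{n+1}=(x_nx_n\ldots x_n)y_n$ with $M_n$ copies of $x_n$ followed by $y_n$, and that $y_n$ is a concatenation of the words $w_{b,k}=\tau_b(w_k)$ where $w_k$ is a back segment of $x_n$ and $\ell(b)=3^{L_n}$. Since $x_*=\lim x_m$ is built by appending material after the $x_{n+1}$ prefix, I would argue that every subsequent stage $m>n$ again produces, as a prefix of its own construction, blocks of the form $x_m$ (which themselves begin with copies of $x_n$) followed by a $y_m$ made of $\tau_b$-images. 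So the entire sequence $x_*$ parses as an alternation of two kinds of segments: literal copies of $x_n$, and $\tau_b$-image segments $w_{b,k}$. I would take the $a_i$ to be these segments, grouping consecutive copies of $x_n$ so that each literal block is exactly one $x_n$ (length $L_n$) and each $\tau$-image block is one full $w_{b,k}$ (length at least $\ell(b)=3^{L_n}$, since $\tau_b$ prepends $\ell(b)$ symbols to a nonempty word).

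The heart of the argument is verifying the two alternatives. For a literal block we simply have $a_i=x_n$, giving case (1); this forces the requirement $\ell(a_i)\ge 3^{L_n}$ to be checked against the $\tau$-image blocks, so I would note $L_n\le 3^{L_n}$ holds trivially and that I may merge a short $x_n$-block into an adjacent block if needed to meet the length bound, or equivalently observe that the literal copies come in runs of length $M_n$ (or more) so that grouping them yields a block far longer than $3^{L_n}$. For a $\tau$-image block $a_i=w_{b,k}=\tau_b(w_k)$, I need to check the monotone-decay condition (2): that $a_i(j)\le\frac{7}{8}a_i(j+1)$ for all $j$ up to $\ell(a_i)-L_n$. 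Here I would use the definition $\tau_0(x)=\theta_0(x)x$ and $\tau_1(x)=\theta_1(x)x$ with $\theta_0(x)=\frac18\|x\|$, $\theta_1(x)=\frac14\|x\|$. Since $\|zx\|=\theta_i(x)\cdot\frac12+\frac12\|x\|$ in the metric $\|x\|=\sum|x(i)|2^{-i}$, and since $\theta_i(x)\le\frac14\|x\|<\|x\|$, each application of $\tau$ prepends a symbol that is comparable to but a definite fraction below the norm of what follows; the key inequality to extract is that the prepended symbol $\theta_i(w)$ is at most $\frac78$ of the next symbol along the word. The first $\ell(b)$ coordinates of $w_{b,k}$ are exactly these prepended $\theta$-values, and the last $\ell(w_k)\le L_n+1$ coordinates are the original back segment of $x_n$, which is why the condition is only asserted for $j\le\ell(a_i)-L_n$.

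The main obstacle I expect is pinning down the decay inequality $a_i(j)\le\frac78 a_i(j+1)$ precisely, because it requires comparing a freshly prepended symbol $\theta_{b_t}(\tau_{b_{t-1}\ldots b_1}(w_k))$ to the symbol immediately to its right, which is the previously prepended symbol $\theta_{b_{t-1}}(\tau_{b_{t-2}\ldots b_1}(w_k))$. I would set $v_t=\tau_{b_t\ldots b_1}(w_k)$ and write $v_t(1)=\theta_{b_t}(v_{t-1})$ and $v_t(2)=v_{t-1}(1)=\theta_{b_{t-1}}(v_{t-2})$, then show $\theta_{b_t}(v_{t-1})\le\frac78\theta_{b_{t-1}}(v_{t-2})$. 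Expanding $\theta_{b_t}(v_{t-1})\le\frac14\|v_{t-1}\|$ and relating $\|v_{t-1}\|$ to $\theta_{b_{t-1}}(v_{t-2})=\|v_{t-1}\|-\frac12\|v_{t-2}\|$ (by the norm recursion), this reduces to a clean numerical bound; I would want the worst case $b_t=1$ (largest prepend) against $b_{t-1}=0$ (smallest prior prepend) and check the constant $\frac78$ absorbs it. For the final sentence of the lemma, I would argue that since $\|\tau_b(w_k)\|\to0$ exponentially in $\ell(b)=3^{L_n}$ uniformly (a fact already noted in the construction), all but the last $O(L_n)$ coordinates of any non-literal $a_i$ are smaller than any prescribed $\varepsilon$ once $n$ is large; I would state the constant as $2L_n$ to comfortably cover the original back segment $w_k$ of length at most $L_n+1$ together with slack. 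This last step is routine given the exponential decay, so the genuine work is entirely in the decay inequality of case (2).
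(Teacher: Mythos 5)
Your outline is the natural one (the paper itself omits this proof, calling it an elementary induction from the definitions), but the single step you explicitly defer --- checking that the constant $\frac78$ ``absorbs'' the worst case $b_t=1$, $b_{t-1}=0$ --- is exactly where the argument breaks, and it breaks numerically. With your notation $v_t=\tau_{b_t\ldots b_1}(w_k)$, the norm recursion gives $\|v_{t-1}\|=\|\tau_0(v_{t-2})\|=\tfrac12\cdot\tfrac18\|v_{t-2}\|+\tfrac12\|v_{t-2}\|=\tfrac{9}{16}\|v_{t-2}\|$, so the symbol prepended at step $t$ is $\theta_1(v_{t-1})=\tfrac14\|v_{t-1}\|=\tfrac{9}{64}\|v_{t-2}\|$, while the symbol immediately to its right is $\theta_0(v_{t-2})=\tfrac18\|v_{t-2}\|=\tfrac{8}{64}\|v_{t-2}\|$. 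Thus at such a junction $a_i(j)=\tfrac98\,a_i(j+1)$: the word actually decreases left-to-right there, so no constant less than $1$, let alone $\tfrac78$, can hold per step. This is not a repairable detail of your write-up: the construction uses every $b\in\{0,1\}^{3^{L_n}}$, so this pattern recurs every other symbol in blocks like $b=1010\ldots$, deep inside the prepended region; and since condition (2) is waived only on the last $L_n$ coordinates of each $a_i$ while type-(2) blocks must have length at least $3^{L_n}$, no re-cutting of the decomposition can shelter all the bad pairs. With the constants as printed, the statement you set out to prove is false. (There is also a small algebra slip: the recursion gives $\theta_{b_{t-1}}(v_{t-2})=2\|v_{t-1}\|-\|v_{t-2}\|$, not $\|v_{t-1}\|-\tfrac12\|v_{t-2}\|$.)

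Two ways out, both worth recording. First, the two-step decay is true and suffices: $a_i(j)\le\tfrac14\|v_{t-1}\|\le\tfrac14\bigl(\tfrac58\bigr)^2\|v_{t-3}\|=\tfrac{25}{256}\|v_{t-3}\|$ while $a_i(j+2)\ge\tfrac18\|v_{t-3}\|$, hence $a_i(j)\le\tfrac{25}{32}\,a_i(j+2)$; iterating still gives geometric decay of the prepended symbols going leftward, which is all that the ``in particular'' clause --- the only part of the lemma actually used later, in the entropy estimate and in the classification of ergodic measures --- requires, with the same $2L_n$ buffer. Second, the per-step bound becomes true if $\theta_1(x)=\tfrac{3}{16}\|x\|$: the worst ratio is then $\tfrac{3}{16}\cdot\tfrac{9}{16}\big/\tfrac18=\tfrac{27}{32}\le\tfrac78$, and there is internal evidence this is what was intended, since the non-invertibility subsection asserts that the first symbols of $\tau_0(b)$ and $\tau_1(b)$ differ by $\tfrac{1}{16}\|b\|$, which is exact for $\theta_1=\tfrac{3}{16}\|\cdot\|$ but wrong for the printed $\theta_1=\tfrac14\|\cdot\|$ (the gap would be $\tfrac18\|b\|$); with that correction your per-step argument closes. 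Finally, two structural cautions. Your parsing claim describes $x_{n+1}$, not $x_*$: at level $n$ the sequence also contains $\tau$-blocks of every stage $m>n$, whose tails are back segments of $x_m$ and hence begin with partial copies of $x_n$ or partial decay regions, so the decomposition genuinely needs an induction on $m$, with the delicate junctions falling inside the waived final-$L_n$ windows. And the length bound $\ell(a_i)\ge3^{L_n}$ can only be meant for type-(2) blocks (a block equal to $x_n$ has length $L_n<3^{L_n}$), so your proposed merging of consecutive copies of $x_n$ --- which would produce blocks that are neither equal to $x_n$ nor of type (2) --- should be dropped rather than invoked.
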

The proof of the lemma is an elementary induction from the definitions,
and is omitted.
\begin{proof}
(of claim \ref{claim:word-growth-rate}) Fix $\varepsilon>0$ and
let $z_{*}=[x_{*}]_{\varepsilon/2}$ and $z_{m}=[x_{m}]_{\varepsilon/2}$.
From the lemma , we see that for the given $\varepsilon$ for large
enough $m$ we can write \[
z_{*}=v_{1}v_{2}v_{3}\ldots\]
 and for each $i$ the word $v_{i}$ is either equal to $z_{m}$,
or else $\ell(v_{i})\geq3^{L_{m}}$ and at least a $(1-2^{-L_{m}})$-fraction
of the coordinates of $v_{i}$ are $0$. In view of this, the fact
that the number of subwords of $z_{*}$ of length $n$ grows sub-exponentially
is now a standard counting argument, and the claim follows. This shows
that $h_{\textrm{top}}(X,\sigma)=0$.
\end{proof}

\subsection{$x_{*}$ is generic for a globally-supported measure $\mu$ on $X$.}

A point $y$ in a dynamical system $(Y,S)$ is a generic point for
a measure $\mu$ if for every continuous function $f\in C(Y)$ it
holds that $\lim_{N\rightarrow\infty}\frac{1}{N}\sum_{n=1}^{N}f(S^{i}y)$
exists. When this is true then $\frac{1}{N}\sum_{n=1}^{N}\delta_{S^{i}y}$
converges in the weak-$*$ topology to an invariant measure $\mu$
on $Y$ (here $\delta_{x}$ is the point mass at $x$). One condition
that guarantees that $y$ is generic is that for every open set $U\subseteq Y$
the averages $\lim_{N\rightarrow\infty}\frac{1}{N}\sum_{n=1}^{N}1_{U}(S^{i}y)$
exist; in fact it is sufficient to verify this for $U$ coming from
a basis for the topology of $X$.

For $U\subseteq[0,1]^{k}$, let \[
[U]=U\times[0,1]^{\mathbb{N}\setminus\{1,\ldots,k\}}\subseteq[0,1]^{\mathbb{N}}\]
be the cylinder determined by $U$. Sets of this form for open $U$
constitute a basis for the topology of $[0,1]^{\mathbb{N}}$. We will
show that for every such $U$, the sequence \begin{equation}
p(m)=\frac{1}{m}\sum_{i=1}^{m}1_{[U]}(\sigma^{i}x_{*})\label{eq:U-density-limit}\end{equation}
converges. This implies that the weak$^{*}$ limit measure \[
\mu=\lim_{n\rightarrow\infty}\frac{1}{n}\sum_{i=1}^{n}\delta_{\sigma^{i}x_{*}}\]
exists, and is a shift-invariant measure on $X$. In fact, we will
show that $\mu(U)>0$ if and only if $p(n)>0$ for some $n$. From
this it will follow that $\mu$ has global support in $X$.

For a finite word $a$ we will say that $a\in[U]$ if $ab\in[U]$
for every infinite $b\in[0,1]^{\mathbb{N}}$. Thus if $a\in[U]$ then
$ab\in[U]$ for every finite $b$. The property $a\in[U]$ depends
only on the first $k$ coordinates of $a$ (recall that $U\subseteq[0,1]^{k}$).
Note that if $\ell(a)<k$ it is possible that $a\notin[U]$ but that
$ab\in[U]$ for some (finite of infinite) $b$.
\begin{claim}
Let $U\subseteq[0,1]^{k}$ and $p(n)$ as above. The limit $\lim_{s\rightarrow\infty}p(L_{s})$
exists; furthermore, if $p(n)>0$ for some $n$ then the limit is
positive.\end{claim}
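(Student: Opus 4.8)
The plan is to show that the normalized counting measures along the finite-word approximations $x_n$ stabilize, and that their limit along the subsequence $L_s$ is what computes $p(L_s)$. The key structural fact is the self-similar decomposition: $x_{n+1}$ consists of $M_n$ copies of $x_n$ followed by the block $y_n$. If I choose the $M_n$ to grow fast enough (which I am free to do, since $M_n$ may depend on everything constructed so far, including $y_n$), then the block $y_n$ occupies a vanishing fraction of $x_{n+1}$, and hence $x_{n+1}$ is, up to a negligible fraction, just $M_n$ repetitions of $x_n$.

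**First I would** fix the cylinder $[U]$ with $U \subseteq [0,1]^k$ and define, for a finite word $w$ of length $\ell$, the quantity $q(w) = \frac{1}{\ell}\#\{1 \le i \le \ell : \sigma^i w \in [U]\}$, i.e. the frequency with which the cylinder condition holds along $w$ (using that "$a \in [U]$" depends only on the first $k$ coordinates). Then $p(L_s) = q(x_s)$ up to an $O(k/L_s)$ error coming from the last $k$ positions, which is negligible as $s \to \infty$. The core estimate is a recursion relating $q(x_{n+1})$ to $q(x_n)$: since $x_{n+1} = x_n^{M_n} y_n$, the frequency of the event "$\sigma^i(\cdot) \in [U]$" over the $M_n$ copies of $x_n$ equals $q(x_n)$ up to boundary corrections at the $M_n$ seams (each seam affects at most $k$ positions, contributing a total error $O(M_n k / L_{n+1})$), plus the contribution of $y_n$, which is $O(\ell(y_n)/L_{n+1})$. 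By choosing $M_n$ large, I would arrange $\ell(y_n)/L_{n+1} \to 0$; and since $L_{n+1} \approx M_n L_n$, the seam error $M_n k / L_{n+1} \approx k/L_n \to 0$ as well. This gives $|q(x_{n+1}) - q(x_n)| \le \delta_n$ with $\sum \delta_n < \infty$, so $q(x_n)$ is Cauchy and $\lim_s p(L_s) = \lim_n q(x_n)$ exists.

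**For the positivity claim,** suppose $p(n_0) > 0$ for some $n_0$, meaning some $\sigma^i x_*$ lies in the open cylinder $[U]$; equivalently, a finite word $a$ with $a \in [U]$ appears in $x_*$, hence in some $x_N$. The whole point of the recursive construction is that every block present at stage $N$ is copied forward: $x_n$ for $n > N$ contains many translated copies of $x_N$ (via the repetition $x_n^{M_n}$, and via the back-segment/$\tau_b$ blocks $w_{b,k}$ in the $y$-parts). Thus the word $a$ recurs with a frequency bounded below by a positive constant independent of $n$, forcing $q(x_n) \ge c > 0$ for all large $n$, and therefore $\lim_n q(x_n) > 0$.

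**The main obstacle** I anticipate is bookkeeping the boundary/seam errors cleanly, and in particular verifying that the frequency count is genuinely governed by the dominant repeated block rather than corrupted by the $y_n$ portion: I must confirm that $\ell(y_n)$ does not grow too fast relative to what the subsequent choice of $M_n$ can absorb. Since $\ell(y_n) = L_n \cdot 3^{L_n} \cdot |\{0,1\}^{3^{L_n}}|$ is a fixed (if large) number once $x_n$ is built, and $M_n$ is chosen afterward, I can always pick $M_n$ so that $M_n L_n \gg \ell(y_n)$; making this quantitative and choosing a summable error sequence $\delta_n$ is the one place requiring care. Restricting to the subsequence $L_s$ rather than all $n$ is precisely what sidesteps having to control partial sums that cut through the middle of a high-stage block, which would otherwise entangle the seam estimates.
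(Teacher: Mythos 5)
Your proposal is correct and follows essentially the same route as the paper's proof: the same self-similar decomposition $x_{n+1}=x_{n}^{M_{n}}y_{n}$, the same accounting of $O(k)$-per-seam and $\ell(y_{n})$-tail errors, and the same exploitation of the freedom to choose $M_{n}$ after $y_{n}$ is known. The only difference is packaging: the paper squeezes the ratios $p(L_{s+1})/p(L_{s})$ between two sequences whose infinite products converge to finite positive limits, obtaining existence and positivity in one stroke, whereas you split this into an additive Cauchy estimate for existence plus a multiplicative lower bound for positivity --- and that last step, which you state tersely (``copied forward \ldots{} frequency bounded below''), should be made explicit via the bound $q(x_{n+1})\ge q(x_{n})\bigl(1-\ell(y_{n})/L_{n+1}\bigr)$ together with the summability of $\ell(y_{n})/L_{n+1}$ that you already arranged, since many copies alone do not force a density bounded away from zero.
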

\begin{proof}
If $\sigma^{n}x_{*}\notin[U]$ for every $n$ then clearly $\lim p(n)=0$.
Therefore we must check only the case when $\sigma^{n}x_{*}\in[U]$
for some $n$. Note that in this case, $p(m)>0$ for all $m\geq n$.
We prove first that $p(L_{r})$ converges at $r\rightarrow\infty$,
and then the general claim.

For a word $a$, let $I(a)$ be the number of indices $0\leq n<\ell(a)$
such that $\sigma^{n}a\in[U]$. If we let $a_{m}$ be the front $m$-segment
of $x_{*}$, we have\[
\frac{I(a_{m})}{m}\leq p(m)\leq\frac{I(a_{m})+k}{m}\]
(the right inequality is because of edge effects; it is possible for
$\sigma^{n}a\notin[U]$ but $\sigma^{n}x_{*}\in[U]$ if $\ell(a)-k<n<\ell(a)$).
In particular, for any $r$ we have\begin{equation}
\frac{I(x_{r})}{L_{r}}\leq p(L_{r})\leq\frac{I(x_{r})+k}{L_{r}}\label{eq:density-estimate}\end{equation}

If $p(L_{r})>0$ then also $p(L_{r+1})>0$, and $x_{r+1}$ contains
at least $M_{r}$ copies of $x_{r}$. Thus if we assume that $M_{s}\geq2^{s}$
for every $s$, we may fix $r$ such that $I(x_{s})\geq2^{s}$ for
every $s\geq r$.

For an $s$ as above, write \[
x_{s+1}=x_{s}x_{s}\ldots x_{s}y_{s}\]
 as in the construction of $x_{s+1}$, with the $x_{s}$'s repeating
$M_{s}$ times. We can write $I(x_{s+1})=I_{1}+I_{2}$, where \begin{eqnarray*}
I_{1} & = & \#\left\{ 0\leq n<M_{s}L_{s}\,:\,\sigma^{n}x_{s+1}\in[U]\right\} \\
I_{2} & = & \#\left\{ M_{s}L_{s}\leq n<L_{s+1}\,:\,\sigma^{n}x_{s+1}\in[U]\right\} \end{eqnarray*}
We have \[
M_{s}\cdot I(x_{s})\leq I_{1}\leq M_{s}\cdot(I(x_{s})+k)\]
since we may gain at most $M_{s}k$ occurrences at the edges of the
$x_{s}$'s but we can't lose occurrences. Also we have the trivial
bound $I_{2}\leq\ell(y_{s})$. Therefore\[
M_{s}I(x_{s})\leq I(x_{s+1})\leq M_{s}(I(x_{s})+k)+\ell(y_{s})\]
and substituting this and $L_{s+1}=M_{s}L_{s}+\ell(y_{s})$ into inequality
\ref{eq:density-estimate} we get\[
\frac{M_{s}\cdot I(x_{s})}{M_{s}L_{s}+\ell(y_{s})}\leq p(L_{s+1})\leq\frac{M_{s}\cdot I(x_{s})+\ell(y_{s})+(M_{s}+1)k}{M_{s}L_{s}+\ell(y_{s})}\]
 dividing the middle term by $p(L_{s})$ and using (\ref{eq:density-estimate})
again, we get\[
\frac{1}{1+k/I(x_{s})}\cdot\frac{1}{1+\ell(y)/M_{s}L_{s}}\leq\frac{p(L_{s+1})}{p(L_{s})}\leq\frac{1+k/I(x_{s})+(\ell(y)+k)/M_{s}I(x_{s})}{1+\ell(y)/M_{s}L_{s}}\]
We saw above that $k/I(x_{s})$ is exponentially small in $s$. Thus
if $\{M_{n}\}$ grows quickly enough, both the expression on the left,
which we denote $\alpha_{s}$, and the expression on the right, which
we denote $\beta_{s}$, converge to $1$ rapidly enough for their
product to converge to a finite positive number. Now the relation
$\alpha_{s}\leq\frac{p(L_{s+1})}{p(L_{s})}\leq\beta_{s}$ and the
fact that $0<\prod_{r}^{\infty}\alpha_{s},\prod_{r}^{\infty}\beta_{s}<\infty$
implies $p(L_{s})$ converges to a positive limit as $s\rightarrow\infty$.\end{proof}
\begin{claim}
For $U$ and $p(n)$ as above, $\lim_{n\rightarrow\infty}p(n)$ exists
and is positive if $p(n)>0$ for some $n$.\end{claim}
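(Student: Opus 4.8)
The plan is to leverage the previous claim, which already supplies convergence of $p$ along the subsequence $(L_s)$, and to promote it to convergence along all of $\mathbb{N}$ by exploiting the self-similar block structure of $x_*$. If $\sigma^n x_*\notin[U]$ for every $n$ then $p\equiv 0$ and there is nothing to prove, so assume $p(n)>0$ for some $n$ and set $L=\lim_{s}p(L_s)$, which is positive by the previous claim.

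Fix $m$ and let $s$ be the unique index with $L_s\le m<L_{s+1}$; as $m\to\infty$ we have $s\to\infty$. Since $a_m:=x_*|_{[1;m]}$ is a front segment of $x_{s+1}=x_s^{M_s}y_s$, I would peel off whole copies of $x_s$, then of $x_{s-1}$, and so on, writing $a_m=x_s^{q_s}x_{s-1}^{q_{s-1}}\cdots x_t^{q_t}\,w$ with $0\le q_j\le M_j$, where the final remainder $w$ is a short front segment of either $x_0$ or of some $y_{t-1}$; this is just the greedy mixed-radix expansion $m=\sum_j q_jL_j+\ell(w)$. Because membership in $[U]$ is determined by a window of length $k$, counting occurrences block by block yields $I(a_m)=\sum_{j\ge t}q_jI(x_j)+I(w)+O\big(k\sum_j q_j\big)$, the error arising solely from the at most $\sum_j q_j$ internal block boundaries. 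Substituting $I(x_j)=L_jp(L_j)+O(k)$ and $\sum_{i\le m}1_{[U]}(\sigma^i x_*)=I(a_m)+O(k)$ from (\ref{eq:density-estimate}), I obtain
\[
p(m)=\frac{\sum_{j} q_jL_j\,p(L_j)}{m}+\frac{I(w)}{m}+O\!\left(\frac{k\sum_j q_j}{m}\right),
\]
so that, up to controlled errors, $p(m)$ is a convex combination of the numbers $p(L_j)$ with weights $q_jL_j/m$.

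It then remains to see that every error term vanishes and that the convex combination concentrates near $L$, which I would do with three estimates. First, the remainder weight $I(w)/m\le\ell(w)/m$ is negligible: either $w$ is a front segment of $x_0$, so $\ell(w)<L_0$ is bounded while $m\to\infty$, or $w$ is a front segment of $y_{t-1}$, in which case $m\ge M_{t-1}L_{t-1}$ and $\ell(w)/m\le \ell(y_{t-1})/(M_{t-1}L_{t-1})$, which is small because $M_{t-1}$ was chosen large relative to $\ell(y_{t-1})$—exactly the freedom granted by the construction. Second, the top digit satisfies $q_s\le m/L_s$ while the lower digits satisfy $\sum_{j<s}q_j=O(M_{s-1})$, and since $m\ge M_{s-1}L_{s-1}$ this gives $k\sum_j q_j/m=O(k/L_{s-1})\to0$. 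Third, each fixed level $j<s$ carries weight $q_jL_j/m\le M_jL_j/L_s\approx L_{j+1}/L_s\to0$, so the combination concentrates on indices close to $s$; since $p(L_j)\to L$, splitting the sum into a ``high'' range (where $p(L_j)$ is uniformly close to $L$) and a ``low'' range (of total weight $\to0$) shows the combination converges to $L$. Combining the three estimates gives $p(m)\to L$, and $L>0$ in the nontrivial case, as required.

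The main obstacle is the regime where $m$ barely exceeds $L_s$, so that the leading digit $q_s$ equals $1$ and a genuine partial block of length close to $L_s$ survives; here one cannot simply discard the partial block, and the estimate must instead recurse into its own finer structure. What rescues the argument is the bounded-digit constraint $q_j\le M_j$, which—together with the rapid growth of $(L_j)$ and the post-hoc choice of each $M_j$ dominating $\ell(y_j)$—forces both the boundary corrections and the low-level weights to disappear in the limit.
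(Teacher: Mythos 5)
Your proof is correct, and its engine is the same as the paper's: decompose the length-$m$ prefix of $x_*$ into whole copies of earlier-stage words, count occurrences of $[U]$ block by block with an $O(k)$ error per block boundary, and reduce to the previous claim's limit along $(L_s)$. Where you genuinely differ is the depth of the decomposition. You recurse all the way down, writing $a_m=x_s^{q_s}\cdots x_t^{q_t}w$ as a mixed-radix expansion, and must then control the digit sum $\sum_j q_j$ and show the convex combination of the $p(L_j)$ concentrates on large $j$. The paper stops after two levels, writing $a_n=(x_s\cdots x_s)(x_{s-1}\cdots x_{s-1})w$ with $w$ a front segment of $x_{s-1}$, $y_{s-1}$ or $y_s$: the regime you flag as the main obstacle (a partial block of length close to $L_s$ when $m$ barely exceeds $L_s$) is dissolved there by peeling exactly one more level, since the leftover is then at most $\max\{L_{s-1},\ell(y_{s-1}),\ell(y_s)\}$, negligible against $m\ge L_s\ge M_{s-1}L_{s-1}$ (and against $M_sL_s$ in the $y_s$ case). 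Thus the paper expresses $p(n)$ as a weighted average of $p(L_s)$ and $p(L_{s-1})$ alone and concludes from $p(L_s)/p(L_{s-1})\to1$; your route trades that observation for heavier bookkeeping but is self-contained and controls every scale uniformly. Two small repairs: the terminal remainder should be a front segment of $x_0$ or of $y_t$ (not $y_{t-1}$), i.e.\ of the $y_j$ at the level where all $M_j$ copies of $x_j$ were peeled, which is exactly what yields $m\ge M_jL_j$; and the bound $\sum_{j<s}q_j=O(M_{s-1})$ presumes at least geometric growth of the $M_j$ --- either impose $M_j\ge 2M_{j-1}$ (the construction allows it, since each $M_j$ is chosen freely after $y_j$) or settle for $\sum_{j<s}q_j\le sM_{s-1}$, which still gives $k\sum_j q_j/m=O(ks/L_{s-1})\to0$.
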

\begin{proof}
Let $p=\lim p(L_{s})$, the limit of $p(n)$ along the subsequence
$L_{s}$. To show that $p(n)\rightarrow p$, we show that if $L_{s}\leq n<L_{s+1}$
then $p(n)/p(L_{s-1})$ is close to $1$, in a manner depending on
$s$ and tending to $1$ with $s$. To see this, recall that\begin{eqnarray*}
x_{s+1} & = & (x_{s}x_{s}\ldots x_{s})y_{s}\\
 & = & \left((x_{s-1}\ldots x_{s-1}y_{s-1})\ldots(x_{s-1}\ldots x_{s-1}y_{s-1})\right)y_{s}\end{eqnarray*}
Write $a_{n}$ for the front $n$-segment of $x_{s+1}$. Then there
is a unique way to write $a_{n}$ as\[
a_{n}=(x_{s}\ldots x_{s})(x_{s-1}\ldots x_{s-1})w\]
with $w$ a front segment of either $x_{s-1},y_{s-1}$ or $y_{s}$. 

For $n\geq L_{s}$ the number of $x_{s}$'s appearing is at least
$1$. Now consider two alternatives: If $w$ is a front segment of
either $x_{s-1}$ or $y_{s-1}$ then $\ell(w)$ is negligible compared
to $\ell(a_{n})$ because $\ell(a_{n})\geq\ell(x_{s})\geq M_{s-1}\ell(x_{s-1})$
and $M_{s-1}$ has been chosen large. On the other hand if $w=y_{s}$
then all $M_{s}$ repetitions of $x_{s}$ appear in $a_{n}$, and
again we have that $\ell(w)$ is negligible compared to $\ell(a_{n})$. 

An estimate like the one carried out for $p(L_{s})$ shows that we
can ignore edge effects and write $p(n)$ as some weighted average
of $p(L_{s})$ and $p(L_{s-1})$. But we know already that $p(L_{s})/p(L_{s-1})\rightarrow1$,
so $p(n)\approx p(L_{s-1})\rightarrow p$. 
\end{proof}

\subsection{The only ergodic measures on $X$ are $\mu$ and the point mass $\delta_{\overline{0}}$ }

A-priori the measure $\mu$ for which $x_{*}$ is generic need not
be ergodic. Rather than prove directly that $\mu$ is ergodic, we
will show that if $\nu$ is any ergodic measure on $(X,\sigma)$ then
$\nu$ is a convex combination of $\mu$ and $\delta_{\overline{0}}$.
This implies that $\mu$ is an extreme point of the convex set of
invariant measures on $X$, so it is ergodic and is the only ergodic
measure on $X$ other then $\delta_{\overline{0}}$.
\begin{thm}
The only ergodic measures for $(X,\sigma)$ are $\mu$ and $\delta_{0}$.\end{thm}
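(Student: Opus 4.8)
The plan is to show that any ergodic measure $\nu$ decomposes as a convex combination of $\mu$ and $\delta_{\overline{0}}$, which forces $\mu$ to be an extreme point of the simplex of invariant measures and hence ergodic. The key structural input is Lemma \ref{lemma:structure-of-x-star}: for each $n$, the point $x_*$ (and every point in its orbit closure $X$) is built from long blocks, each of which is either a copy of $x_n$ or a block whose coordinates decay geometrically and are therefore uniformly small away from their tail. I would first exploit this to partition $X$ into a ``substantial'' part, where orbits spend time genuinely near copies of the $x_n$-structure, and a ``negligible'' part near $\overline{0}$. Concretely, for a fixed small $\eta>0$ consider the open set $V_\eta=\{y\in X:\|y\|>\eta\}$ and its complement; the decay estimate $\|\tau_b(x)\|\to 0$ uniformly shows that the second type of block contributes points of small norm, so $1_{V_\eta}$ essentially detects when the orbit is sitting inside a genuine $x_n$-copy.

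The heart of the argument is then to compare $\nu$ with $\mu$ on cylinder sets $[U]$, $U\subseteq[0,1]^k$ open, by conditioning on whether the orbit lies in $V_\eta$ or near $\overline{0}$. By the ergodic theorem, for $\nu$-a.e.\ point $y$ the frequencies $\frac{1}{N}\sum_{i=1}^N 1_{[U]}(\sigma^i y)$ converge to $\nu([U])$; I would argue that for $y\neq\overline{0}$ in $X$, the local combinatorial structure of $y$ is governed by the same block decomposition as $x_*$, so these frequencies must match the frequencies $p(n)$ already computed for $x_*$ in the genericity claims above, up to the weight $\nu(V_\eta)$ contributed by the non-$\overline{0}$ part of the orbit. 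The point is that the construction forces every block of $x_*$ at scale $n$ to be (a shift of) either $x_n$ or a near-$\overline{0}$ block, and these are exactly the local pictures available to any point of $X$; hence the only way statistics can differ from those of $\mu$ is by allocating a positive fraction of time to the fixed point $\overline{0}$. Taking $\eta\to 0$ and using that $\overline{0}$ is a fixed point, any ergodic $\nu$ with $\nu(\{\overline{0}\})<1$ must give $V_\eta$ positive measure and, restricted there, reproduce exactly the $\mu$-statistics on every cylinder, yielding $\nu=c\,\mu+(1-c)\,\delta_{\overline{0}}$; ergodicity of $\nu$ then forces $c\in\{0,1\}$, so $\nu\in\{\mu,\delta_{\overline{0}}\}$.

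I expect the main obstacle to be making rigorous the claim that \emph{every} point of $X$, not just $x_*$ and its forward orbit, inherits the block decomposition of Lemma \ref{lemma:structure-of-x-star}. Orbit-closure points arise as limits $\sigma^{n(k)}x_*\to y$, and a priori such a limit could ``splice together'' the tail of one block structure with the head of another at the point of convergence, creating boundary configurations not literally present in $x_*$. I would control this by noting that the blocks have length at least $3^{L_n}\to\infty$, so any bounded window of $y$ eventually lies inside a single block of the approximating $x_*$; combined with the uniform decay of the near-$\overline{0}$ blocks, this shows that apart from a single (measure-zero under any non-atomic ergodic measure) splice region, $y$ looks locally exactly like either an $x_n$-copy or a near-$\overline{0}$ stretch. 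The bookkeeping of edge effects — already visible in the $\frac{I(x_r)+k}{L_r}$ estimates in the genericity claims — and the uniform-in-$s$ control needed to pass the frequency comparison through the limit $\eta\to 0$ will require care, but these are quantitative refinements of estimates already established above rather than new ideas.
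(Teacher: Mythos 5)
Your overall strategy coincides with the paper's: show that every ergodic $\nu$ on $X$ is a convex combination $\lambda\mu+(1-\lambda)\delta_{\overline{0}}$ by means of the block decomposition of Lemma \ref{lemma:structure-of-x-star}, then use ergodicity to force $\lambda\in\{0,1\}$ and extremality to deduce that $\mu$ itself is ergodic. Where you differ is in the implementation: you apply the ergodic theorem at $\nu$-generic points $y\in X$ and then try to transfer the block structure from $x_*$ to $y$, which is exactly the obstacle you flag at the end. The paper sidesteps this entirely: since the orbit of $x_*$ is dense, any $\nu$-generic point is a limit of shifts of $x_*$, so $\nu$ is itself a weak$^*$ limit of empirical measures $\frac{1}{m(n)-k(n)+1}\sum_{i=k(n)}^{m(n)}\delta_{\sigma^{i}x_*}$ along finite windows $w_n=x_*|_{[k(n),m(n)]}$; all combinatorial analysis (defining $\lambda_n$ as the fraction of indices of $w_n$ lying in $x_{r(n)}$-blocks and passing to a subsequence with $\lambda_n\rightarrow\lambda$) then takes place on $x_*$ itself, where the structure lemma applies verbatim. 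Your route can be made to work, and in fact your ``splicing'' worry is vacuous for the same reason: every finite window of $y$ is a coordinatewise limit of genuine windows of $x_*$, so no local configuration can arise in $y$ that is not already approximately present in $x_*$. But note that making this precise amounts to replacing the generic point $y$ by windows of $x_*$ — that is, to adopting the paper's device; what the paper's formulation buys is precisely that the structure-transfer problem never has to be addressed.

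One concrete misstep needs fixing: the claim that $1_{V_\eta}$, with $V_\eta=\{y\in X:\|y\|>\eta\}$, ``essentially detects when the orbit is sitting inside a genuine $x_n$-copy'' is false. A copy of $x_{r(n)}$ contains, nested inside it, the long near-zero segments $y_m$ (for $m<r(n)$) produced at all earlier stages of the construction, so the orbit spends long stretches of time inside $x_{r(n)}$-copies at small norm; equivalently, since $\mu$ has full support and $\overline{0}\in X$, one has $\mu(X\setminus V_\eta)>0$ for every $\eta>0$, so time spent near $\overline{0}$ cannot be attributed solely to the $\delta_{\overline{0}}$-component, and $\nu$ restricted to $V_\eta$ cannot literally ``reproduce the $\mu$-statistics.'' The correct conditioning variable is combinatorial, not metric: membership in an $x_{r(n)}$-block of the scale-$r(n)$ decomposition (the paper's $\lambda_n$). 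The weight $\lambda$ of $\mu$ in the decomposition is this block frequency, and only a posteriori does it coincide with $1-\nu(\{\overline{0}\})$. With that substitution, and with the interplay between window length and scale made explicit (the paper takes $\ell(w_n)>2^{L_{r(n)}}$ with $r(n)\rightarrow\infty$), your argument goes through and yields the same proof.
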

\begin{proof}
Using lemma \ref{lemma:structure-of-x-star}, we can select a sequence
$r(n)\rightarrow\infty$ and write \[
x_{*}=b_{1,n}b_{2,n}b_{3,n}\ldots\]
such that each $b_{i,n}$ is either equal to $x_{r(n)}$, or has the
property that $\ell(b_{i,n})\geq3^{L_{r(n)}}$ and all but the final
$2L_{r(n)}$ coordinates are $<1/n$. 

If $\nu$ is an ergodic measure for $(X,\sigma)$ then for some sequence
with $m(n)-k(n)\rightarrow\infty$ we have \[
\nu=\lim_{n\rightarrow\infty}\frac{1}{m(n)-k(n)+1}\sum_{i=k(n)}^{m(n)}\delta_{\sigma^{i}x_{*}}\]
(this follows from the fact that by the ergodic theorem $\nu$ has
generic points, and these can be approximated arbitrarily well by
shifts $\sigma^{i}(x_{*})$ of $x_{*}$). By passing to sub-sequences
we can assume that $m(n)-k(n)>2^{L_{r(n)}}$; denote $w_{n}=x_{*}|_{[k(n),m(n)]}$
so that $\ell(w_{n})>2^{L_{r(n)}}$. Write $\lambda_{n}$ for the
total number of indices $i=1,\ldots,\ell(w_{n})$ such that $i$ is
in a word $b_{j,n}$ with $b_{j,n}=x_{r(n)}$. We may further assume,
by passing to a subsequence, that $\lambda_{n}\rightarrow\lambda\in[0,1]$. 

Now we can write $w_{n}=b'b_{i(n),n}\ldots b_{j(n),n}b''$ for some
$i(n)<j(n)$ and $b',b''$ as short as possible. Notice that if $b_{i(n)-1,n}$
or $b_{j(n)+1,n}$ are $x_{r(n)}$ then their lengths, respectively,
are negligible (logarithmic) compared to $\ell(w_{n})$, and so also
are the lengths of $b',b''$, respectively. On the other hand, if
$b_{i(n)-1,n}$ is not $x_{r(n)}$ and if the length of $b'$ is more
than $\frac{1}{n}\ell(w_{n})$, then that word is made up almost entirely
of coordinates of magnitude less than $1/n$. Similar reasoning holds
for $b''$. It is now simple to verify the following:
\begin{itemize}
\item If $\lambda=0$ then for large $n$ most of $w_{n}$ is made up of
coordinates of magnitude $<1/n$, so in this case we have $\nu=\delta_{0}$. 
\item If $\lambda=1$, then for large $n$, the distribution of words of
length $\sqrt{L_{r(n)}}$ in $w_{n}$ is very close to their distribution
in $x_{r(n)}$, and since $r(n)\rightarrow\infty$ we have $\nu=\mu$
in this case.
\item Finally for $0<\lambda<1$ the same reasoning as above shows that
\[
\nu=\lambda\mu+(1-\lambda)\delta_{0}\]
 (note that because the lengths of the $b_{i,n}$ tend to infinity
with $n$, the statistics of subwords of $w_{n}$ of length $\sqrt{L_{r(n)}}$
are only very slightly affected by the places where two $b_{i,n}$'s
meet. Since we assumed that $\nu$ is ergodic, this is impossible.
\end{itemize}
Thus $\nu=\delta_{0}$ or $\nu=\mu$. Since $\mu\neq\delta_{0}$ this
implies that $\mu$ is ergodic. This completes the proof.
\end{proof}

\subsection{\label{sub:Further-comments}Further comments}

This example is optimal in the following sense. Any minimal system
$(X,T)$ has the property that on some dense $G_{\delta}$ subset
of $X$ the preimage of any point is a single point. Thus there are
no minimal extremely non-invertible systems. Thus if we want an extremely
non-invertible system supporting a global ergodic measure we cannot
hope for a uniquely ergodic example. The example we have given is
the next best thing: it has only two invariant measures and a unique
minimal subsystem, the fixed point $\overline{0}$.

The construction can be modified in several ways. For distance one
can guarantee that the preimage set of every point is large: by augmenting
the two functions $\theta_{0},\theta_{1}$ at each stage of the construction
with other functions it is not hard to make the preimage set of every
point of cardinality $2^{\aleph_{0}}$. By modifying $\theta_{0},\theta_{1}$
in a more complex way one can replace the minimal subsystem $\{\overline{0}\}$
with other systems.

Using the last modification, one can establish Example \ref{exa:non-invertible-example-2}
by taking the product of the resulting system with the one-sided two-shift
$\{0,1\}^{\mathbb{N}}$, and the product (Bernoulli) measure. This
yields a system with infinitely many preimages for every point, no
small pre-images, and a globally supported ergodic measure of entropy
$\log2$. In this example there are many other invariant measures;
by a more careful choice of the system we multiply with, e.g. a minimal,
uniquely ergodic subshift with a weak mixing invariant measure of
entropy $\log2$, this can be avoided.

Finally, in the construction we defined words $w_{b,k}=\tau_{b}(w_{k})$
where $b$ varies over all $0,1$-valued sequences of a fixed length.
By varying this length in a {}``random'' way the measure $\mu$
can be made to be weakly mixing, and perhaps even strongly mixing.

\bibliographystyle{plain}
\bibliography{topological-dynamics-ergodic-theory,topological-entropy,bib}

\end{document}